\newtheorem{theorem}{Theorem}
\newtheorem{example}[theorem]{Example}
\newtheorem{lemma}[theorem]{Lemma}
\newtheorem{remark}[theorem]{Remark}
\newtheorem{problem}[theorem]{Problem}
\newproof{proof}{Proof}
\numberwithin{equation}{subsection}
\numberwithin{theorem}{subsection}
\newcommand{\perm}{\mathrm{perm}}
\begin{document}

\begin{frontmatter}

\title{A Characterization of Oriented Hypergraphic Laplacian and Adjacency Matrix Coefficients}

\author[add1]{Gina Chen\fnref{fn1}}
\author[add1]{Vivian Liu\fnref{fn1}}
\author[add2]{Ellen Robinson\fnref{fn2}}
\author[add2]{Lucas J. Rusnak\corref{mycorrespondingauthor}}
\ead{Lucas.Rusnak@txstate.edu}
\author[add1]{Kyle Wang\fnref{fn1}}

\address[add2]{Department of Mathematics, Texas State University, San Marcos, TX 78666, USA}

\address[add1]{Mathworks, Texas State University, San Marcos, TX 78666, USA}

\fntext[fn1]{Portions of these results submitted to the 2016 Siemens Competition (regional semi-finalist).}
\fntext[fn2]{Portions of these results appear in 2017 Master's Thesis.}

\cortext[mycorrespondingauthor]{Corresponding author}

\begin{abstract}

An oriented hypergraph is an oriented incidence structure that generalizes and unifies graph 
and hypergraph theoretic results by examining its locally signed graphic substructure. In this 
paper we obtain a combinatorial characterization of the coefficients of the characteristic polynomials 
of oriented hypergraphic Laplacian and adjacency matrices via a signed hypergraphic generalization of 
basic figures of graphs. Additionally, we provide bounds on the determinant and permanent of the 
Laplacian matrix, characterize the oriented hypergraphs in which the upper 
bound is sharp, and demonstrate that the lower bound is never achieved.

\end{abstract}

\begin{keyword}
Laplacian matrix \sep adjacency matrix \sep oriented hypergraph \sep characteristic polynomial.
\MSC[2010] 05C50 \sep 05C65 \sep 05C22
\end{keyword}

\end{frontmatter}


\section{Introduction}

Sachs' Coefficient Theorem provides a combinatorial interpretation of the
coefficients of the characteristic polynomial of the adjacency matrix of a
graph as families of sub-graphs \cite{SGBook}, this was recently extended to
signed graphs in \cite{Sim1}. In this paper we obtain an oriented
hypergraphic generalization of Sachs' Coefficient Theorem that extends to
the oriented hypergraphic Laplacian and the signless Laplacian. This
extension shows that the standard adjacency matrix coefficients are the
restricted enumeration of a family of sub-incidence-structures associated to
any finite integral incidence matrix --- providing a single class of
combinatorial objects to study the coefficients of both characteristic
polynomials.

These theorems are unified and generalized by using the weak walk Theorem
for oriented hypergraphs in \cite{AH1, OHHar}, which unifies the entries of
the oriented hypergraphic matrices as weak walk counts, then constructing
incidence preserving maps from disjoint $1$-paths into a given oriented
hypergraph. Restrictions of these maps to adjacency preserving maps on
sub-oriented-hypergraphs obtained by weak deletion of vertices allows for
the reclaiming of basic figures of graphs, as well as the determinant of the
adjacency matrix by cycle covers.

The necessary oriented hypergraphic background\ and Sachs' Theorem are
collected in Section \ref{background}. In Section \ref{IPM} we examine the
relationship between incidence preserving maps, weak walks, and generalized
cycle covers called \emph{contributors}. Section \ref{main} establishes the
permanent and determinant of the adjacency and Laplacian matrices as
contributor counts as well as the main coefficient theorems of determinant
and permanent versions of the characteristic polynomials. Finally,
contributor counts are used to provide upper and lower bounds for the
determinant and permanent of the Laplacian matrix over all orientations of a
given underlying hypergraph. The lower bound is shown to never be sharp,
while the family of oriented hypergraphs that achieve the upper bound are
characterized.

\section{Background\label{background}}

\subsection{Oriented Hypergraphs}

These definitions are an adaptation of those appearing in \cite{OH1, AH1,
OHHar}, and allow for oriented hypergraphs to be treated as locally signed
graphic through its adjacencies.

An \emph{oriented hypergraph} is a quintuple $(V,E,\mathcal{I},\iota ,\sigma
)$ where $V$, $E$, and $\mathcal{I}$ denote disjoint sets of \emph{vertices}%
, \emph{edges}, and \emph{incidences}, respectively, with incidence function 
$\iota :\mathcal{I}\rightarrow V\times E$, and orientation function $\sigma :%
\mathcal{I}\rightarrow \{+1,-1\}$. We say $v$\emph{\ and }$e$\emph{\ are
incident along }$i$ if $\iota (i)=(v,e)$. Two incidences $i$ and $j$ are
said to be \emph{parallel} if $\iota (i)=\iota (j)$ --- this provides an
equivalence class of parallel incidences where the size of each equivalence
class is called the \emph{multiplicity of the incidence}.

\begin{figure}[H]
\centering
\includegraphics{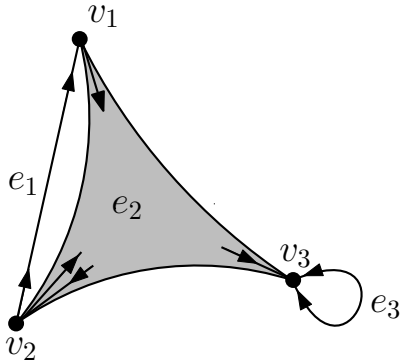}
\caption{An oriented hypergraph.}
\label{ExOH}
\end{figure}

The \emph{degree of vertex }$v$ is $\deg (v):=\left\vert \{i\in \mathcal{I}%
\mid (proj_{V}\circ \iota )(i)=v\}\right\vert $, while the \emph{size of an
edge }$e$ is $size(e):=\left\vert \{i\in \mathcal{I}\mid (proj_{E}\circ
\iota )(i)=e\}\right\vert $. Vertices $v$ and $w$ are said to be \emph{%
adjacent with respect to edge }$e$ if there are incidences $i\neq j$ such
that $\iota (i)=(v,e)$ and $\iota (j)=(w,e)$. A \emph{directed adjacency} is
a quintuple $(v,i,e,j,w)$ where $v$ and $w$ are adjacent with respect to
edge $e$ using incidences $i$ and $j$. Observe that if the directed
adjacency $(v,i,e,j,w)$ exists, then the opposite directed adjacency $%
(w,j,e,i,v)$ also exists. An \emph{adjacency} is the set associated to a
directed adjacency (or its opposite). Dropping the symmetric directedness
condition allows for directed oriented hypergraphic results to be studied.
The \emph{sign of the adjacency }$(v,i,e,j,w)$ is%
\begin{equation*}
sgn(v,i,e,j,w)=-\sigma (i)\sigma (j)\text{,}
\end{equation*}%
and $sgn(v,i,e,j,w)=0$ if $v$ and $w$ are not adjacent. \emph{Weak deletion
of vertex }$v$ is the oriented hypergraph obtained by deleting vertex $v$
and all incidences containing $v$ --- note this does not delete edges.

\subsection{Weak Walks}

A \emph{(directed) weak walk} is a sequence 
\begin{equation*}
W=(a_{0},i_{1},a_{1},i_{2},a_{2},i_{3},a_{3},...,a_{n-1},i_{n},a_{n})
\end{equation*}%
of vertices, edges and incidences, where $\{a_{k}\}$ is an alternating
sequence of vertices and edges, and $i_{k}$ is an incidence between $a_{k-1}$
and $a_{k}$; specifically, $\{(proj_{V}\circ \iota )(i_{k}),(proj_{E}\circ
\iota )(i_{k})\}=\{a_{k-1},a_{k}\}$. Similar to directed adjacencies, a 
\emph{weak walk} is the set associated to a directed weak walk. The prefix 
\emph{vertex/edge/cross} is used when the end points of a weak walk are
vertices/edges/one edge and one vertex. The \emph{length of a weak walk} is
half the number of incidences in the weak walk.

A \emph{vertex walk} is a weak walk where $a_{0}$, $a_{n}\in V$, and $%
i_{2k-1}\neq i_{2k}$, and an adjacency is a vertex walk of length $1$. A 
\emph{vertex backstep} is a weak walk of length $1$ of the form $(v,i,e,i,v)$%
, while a \emph{loop} is a vertex walk of the form $(v,i,e,j,v)$ where $%
i\neq j$. A \emph{vertex path} is a vertex walk where no vertex or edge is
repeated, while a \emph{circle} is a vertex path except $a_{0}=a_{n}$.
Analogous edge-centric definitions exist for the incidence dual and the
results are inherited.

The \emph{sign of a weak walk} $W$ is%
\begin{equation*}
sgn(W)=(-1)^{\lfloor n/2\rfloor }\prod_{h=1}^{n}\sigma (i_{h})\text{,}
\end{equation*}%
which is equivalent to taking the product of the signed adjacencies if $W$
is a vertex walk; see \cite{SG, OSG, MR0267898} for bidirected graphs as
orientations of signed graphs.

\subsection{Oriented Hypergraphic Matrices}

\label{OHM}

The vertices and edges of an oriented hypergraph $G$ are regarded as totally
ordered as the row and column labels of a given $V\times E$ \emph{incidence
matrix} $\mathbf{H}_{G}$ where the $(v,e)$-entry is the sum of all $\sigma
(i)$ such that $\iota (i)=(v,e)$. The \emph{adjacency matrix} $\mathbf{A}%
_{G} $ of an oriented hypergraph $G$ is the $V\times V$ matrix whose $(v,w)$%
-entry is the sum of all signed adjacencies from $v$ to $w$. The \emph{%
degree matrix} of an oriented hypergraph $G$ is the $V\times V$ diagonal
matrix $\mathbf{D}_{G}:=diag(\deg (v_{1}),\ldots ,\deg (v_{n}))$. The \emph{%
Laplacian matrix of }$G$ is defined as $\mathbf{L}_{G}:=\mathbf{H}_{G}%
\mathbf{H}_{G}^{T}=\mathbf{D}_{G}-\mathbf{A}_{G}$ for all oriented
hypergraphs.

The \emph{(vertex) weak walk matrix of length }$k$ is the matrix $\mathbf{W}%
_{(G,V,V,k)}$ whose $(v,w)$-entry is the number of positive weak walks of
length $k$ from $v$ to $w$ minus the number of negative weak walks of length 
$k$ from $v$ to $w$. It was shown in \cite{AH1} for incidence-simple
oriented hypergraphs, and improved to all oriented hypergraphs in \cite%
{OHHar}, that the entries of oriented hypergraphic matrices are weak walk
counts.

\begin{theorem}
\label{WWL}Let $G$ be an oriented hypergraph.

\begin{enumerate}
\item The $(v,w)$-entry of $\mathbf{D}_{G}$ is the number of strictly weak,
weak walks, of length $1$ from $v$ to $w$. That is, the number of backsteps
from $v$ to $w$.

\item The $(v,w)$-entry of $\mathbf{A}_{G}$ is the number of positive
(non-weak) walks of length $1$ from $v$ to $w$ minus the number of negative
(non-weak) walks of length $1$ from $v$ to $w$.

\item The $(v,w)$-entry of $-\mathbf{L}_{G}$ is the number of positive weak
walks of length $1$ from $v$ to $w$ minus the number of negative weak walks
of length $1$ from $v$ to $w$. That is, $-\mathbf{L}_{G}=\mathbf{W}%
_{(G,V,V,1)}$.
\end{enumerate}
\end{theorem}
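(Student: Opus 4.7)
The plan is to verify each of the three claims by unwinding the definitions, with part~3 obtained most transparently from a direct computation of $\mathbf{H}_G\mathbf{H}_G^T$ (equivalently, from parts~1 and~2 together with $\mathbf{L}_G = \mathbf{D}_G - \mathbf{A}_G$).

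For part~1, I would observe that a backstep has the form $(v,i,e,i,v)$, so it begins and ends at the same vertex; hence off-diagonal entries are zero on both sides, and on the diagonal the backsteps at $v$ are in bijection with the incidences containing $v$, of which there are $\deg(v)$. For part~2, I would use that by definition $(\mathbf{A}_G)_{vw}$ is the sum of $-\sigma(i)\sigma(j)$ taken over all adjacencies $(v,i,e,j,w)$. Since each such adjacency is precisely a non-weak vertex walk of length $1$ whose weak-walk sign $(-1)^{\lfloor 2/2\rfloor}\sigma(i)\sigma(j)$ agrees with the value $-\sigma(i)\sigma(j)\in\{\pm 1\}$, the sum partitions into $+1$'s coming from positive walks and $-1$'s from negative walks, yielding the stated signed count.

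For part~3, I would expand
\[
(\mathbf{L}_G)_{vw} \;=\; (\mathbf{H}_G\mathbf{H}_G^T)_{vw} \;=\; \sum_{e\in E}\;\sum_{\iota(i)=(v,e)}\;\sum_{\iota(j)=(w,e)} \sigma(i)\sigma(j),
\]
and observe that each summand equals $-sgn(W)$ for the length-$1$ weak walk $W=(v,i,e,j,w)$, using the weak-walk sign formula with $n=2$. Consequently, $-(\mathbf{L}_G)_{vw}$ is the sum of $sgn(W)$ over every length-$1$ weak walk from $v$ to $w$, which is exactly the positive-minus-negative count defining the $(v,w)$-entry of $\mathbf{W}_{(G,V,V,1)}$.

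The principal bookkeeping step, and essentially the only place signs could go astray, is the diagonal case of part~3. There one must split the inner sum by whether $i=j$ (a backstep, whose weak-walk sign is $-\sigma(i)^2=-1$) or $i\neq j$ (a loop). Once this split is handled carefully, the $\deg(v)$ contribution appearing in $(\mathbf{H}_G\mathbf{H}_G^T)_{vv}$ is correctly identified as coming from $\deg(v)$ negative backsteps, reconciling part~3 with parts~1 and~2 and delivering the Laplacian decomposition $\mathbf{L}_G=\mathbf{D}_G-\mathbf{A}_G$ at the level of weak walks.
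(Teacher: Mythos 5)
Your proposal is correct, but there is nothing in the paper to compare it against: Theorem \ref{WWL} is quoted from \cite{AH1, OHHar} and the paper supplies no proof of its own, so your direct verification from the definitions stands on its own. Each part checks out. For part 1, strictly weak length-$1$ walks $(v,i,e,i,v)$ are in bijection with incidences at $v$, giving $\deg(v)$ on the diagonal and $0$ off it. For part 2, a non-weak length-$1$ walk is exactly a directed adjacency $(v,i,e,j,w)$ with $i\neq j$, and the weak-walk sign $(-1)^{\lfloor 2/2\rfloor}\sigma(i)\sigma(j)=-\sigma(i)\sigma(j)$ coincides with the defined adjacency sign, so the entry of $\mathbf{A}_G$ is the positive-minus-negative count. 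For part 3, expanding $(\mathbf{H}_G\mathbf{H}_G^T)_{vw}$ as a double sum over incidence pairs $(i,j)$ with $\iota(i)=(v,e)$, $\iota(j)=(w,e)$ identifies each summand $\sigma(i)\sigma(j)$ with $-sgn(v,i,e,j,w)$, which is precisely $-\mathbf{L}_G=\mathbf{W}_{(G,V,V,1)}$; note this computation is uniform in $v,w$ and needs no case split. Your final paragraph's $i=j$ versus $i\neq j$ split is not required for part 3 itself, but it is exactly the right observation to confirm that the $\deg(v)$ backsteps account for $\mathbf{D}_G$ and the $i\neq j$ pairs for $-\mathbf{A}_G$, i.e., that parts 1--3 are mutually consistent with $\mathbf{L}_G=\mathbf{D}_G-\mathbf{A}_G$. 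The only definitional point worth flagging is that the counts in the theorem are of directed weak walks (ordered incidence pairs), matching the paper's convention that an adjacency and its opposite both contribute; your argument implicitly uses this and is consistent with it.
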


Powers of these matrices extend to weak walks of length $k$, for
non-negative integers $k$, while entries of $\mathbf{H}_{G}$ are half-walks.

\subsection{Sachs' Theorem}

Sachs' Theorem provides a combinatorial count for the coefficients of the
characteristic polynomial of the adjacency matrix of a graph, for an
expanded development see \cite{SGBook}.

An \emph{elementary figure} is either a link graph or a cycle on $n$
vertices where $n\geq 1$. A \emph{basic figure} $U$ is a graph that is the
disjoint union of elementary figures. Let $\mathscr{U}_{k}$ denote the set
of all basic figures that are contained in $G$ and have exactly $k$ isolated
vertices, let $p(U)$ be the number of elementary figures of $U$ and let $%
c(U) $ denote the number of circuits in $U$.

\begin{theorem}[Sachs' Theorem]
\label{Sachs}For a graph $G$ with $n=|V(G)|$,%
\begin{equation*}
\chi _{G}(A,x)=\sum\limits_{k=1}^{n}\left( \sum\limits_{U\in \mathscr{U}%
_{k}}(-1)^{p(U)}(2)^{c(U)}\right) x^{k}\text{.}
\end{equation*}
\end{theorem}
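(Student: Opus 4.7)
The plan is to expand $\chi_G(A,x) = \det(xI - A) = \sum_{\pi \in S_n} \mathrm{sgn}(\pi) \prod_{v \in V} (xI - A)_{v,\pi(v)}$ by the Leibniz formula and interpret each nonzero summand combinatorially. Decomposing $\pi$ into its disjoint cycles, each fixed point contributes the diagonal entry $x$ (since $A$ has zero diagonal), while each non-trivial cycle $(v_1,v_2,\ldots,v_m)$ is forced to use only entries of $-A$, which are nonzero precisely when $v_1v_2, v_2v_3, \ldots, v_mv_1$ are edges of $G$. Thus a $2$-cycle corresponds to an edge (link) and an $m$-cycle with $m \ge 3$ corresponds to an oriented traversal of an $m$-circuit in $G$.

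Next I would group the contributing permutations by the underlying basic figure $U$ on the non-fixed vertices, where $U$ is the disjoint union of the edges picked out by the $2$-cycles together with the circuits picked out by the longer cycles. If $\pi$ fixes exactly $k$ vertices then $U \in \mathscr{U}_k$, and the corresponding product over $v \in V$ yields $x^k \cdot (-1)^{n-k}$ (one $-1$ per off-diagonal entry used). Each edge component of $U$ is recovered from a unique transposition, while each circuit component corresponds to exactly two permutation cycles (clockwise and counterclockwise), so precisely $2^{c(U)}$ permutations have $U$ as their underlying basic figure.

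The heart of the argument is a sign calculation. Writing $U$ as $e$ edges together with circuits of lengths $m_1,\ldots,m_r$ where $r = c(U)$, we have $p(U) = e + r$ and $2e + m_1 + \cdots + m_r = n - k$. Each contributing permutation has Leibniz sign
\[
(-1)^e \prod_{i=1}^{r}(-1)^{m_i - 1} = (-1)^{e + (n-k) - 2e - r} = (-1)^{n-k-p(U)}.
\]
Combining with the $(-1)^{n-k}$ from the off-diagonal entries of $xI - A$ gives a net factor of $(-1)^{p(U)}$, so the contribution of $U$ to the coefficient of $x^k$ is exactly $(-1)^{p(U)} 2^{c(U)}$. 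Summing over $U \in \mathscr{U}_k$ and then over $k$ produces the stated formula.

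The main obstacle I expect is the careful sign accounting: the $(-1)$'s arise from three different sources (the Leibniz sign of $\pi$, the $-A$ within $xI - A$, and the $(-1)^{m_i - 1}$ contribution of each long permutation cycle), and one must verify that they telescope to precisely $(-1)^{p(U)}$ independently of $n$ and $k$. Once this telescoping is established and the bijection between disjoint-cycle decompositions of $\pi$ and basic figures together with orientation choices is made explicit, the enumeration per basic figure is immediate and the theorem follows.
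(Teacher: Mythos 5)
Your argument is correct: expanding $\det(xI-A)$ by the Leibniz formula, observing that a nonzero summand forces each nontrivial cycle of $\pi$ onto a link or circuit of $G$, counting $2^{c(U)}$ permutations per basic figure (one transposition per link, two orientations per circuit), and telescoping the signs via $(-1)^{n-k-p(U)}\cdot(-1)^{n-k}=(-1)^{p(U)}$ is exactly the classical proof, and the computation $\mathrm{sgn}(\pi)=(-1)^{e+\sum_i(m_i-1)}=(-1)^{n-k-p(U)}$ checks out. (You implicitly, and correctly, take $p(U)$ to count only the links and circuits and not the $k$ isolated vertices; with the paper's literal definition of elementary figures on $n\geq 1$ vertices the sign would fail already for the leading coefficient.) However, this is a genuinely different route from the paper's: the paper does not prove Theorem~\ref{Sachs} directly at all, but states it as known background and later recovers it as a corollary of Theorem~\ref{MainT}, whose proof replaces your permutation-by-permutation bookkeeping with incidence- and adjacency-preserving maps of disjoint copies of $\overrightarrow{P}_{1}$ (contributors), grouped into permutomorphism classes $\mathfrak{C}_{\pi}(G)$; restricting to graphs, the sets $\widehat{\mathfrak{C}}_{=k}(G)$ become oriented basic figures and $(-1)^{pc(c)}$ collapses to $(-1)^{p(U)}$ with the two orientations of each circle supplying the $2^{c(U)}$. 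Your approach buys a short, self-contained, elementary derivation for simple graphs; the paper's buys a single combinatorial object that simultaneously handles the adjacency and Laplacian matrices, both the determinant- and permanent-based characteristic polynomials, and arbitrary oriented hypergraphs, at the cost of the heavier contributor machinery. Both rest on the same underlying engine, namely grouping the terms of the Leibniz expansion by the substructure they trace out in $G$.
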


Sachs' Theorem has been generalized to signed graphs in \cite{Sim1}. While
alternate proofs of the Laplacian and signless Laplacian of a graph also
appear in \cite{SGBook} and \cite{Sim1}, we generalize Sachs' Theorem to
oriented hypergraphs for both the adjacency and Laplacian matrices (the
signless Laplacian is contained as a specific orientation) using a single
universal combinatorial interpretation of coefficients for both determinant
and permanent based characteristic polynomial.

\section{Incidence Preserving Maps\label{IPM}}

\subsection{Contributors as Generalized Basic Figures}

The incidence preserving maps introduced in this section are motivated by
the directed graphic version of monic, adjacency-preserving, path embeddings
into circuit graphs appearing in \cite{Grill1} as a study of $\Phi $%
-injectivity classes.

Given hypergraphs $H=(V_{H},E_{H},\mathcal{I}_{H},\iota _{H})$ and $%
G=(V_{G},E_{G},\mathcal{I}_{G},\iota _{G})$ with no isolated vertices or $0$%
-edges, an \emph{incidence preserving map} is a function $\alpha
:H\rightarrow G$ such that the following diagram commutes:

\begin{equation*}
\begin{tikzpicture}[node distance = 1.25in, auto] \node (I1)
{$\mathcal{I}_H$}; \node (I2) [right of=I1] {$\mathcal{I}_G$}; \node (VE1)
[below of=I1] {$V_H \times E_H$}; \node (VE2) [below of=I2] {$V_G \times
E_G$}; \draw[->] (I1) to node {$\alpha_\mathcal{I}$} (I2); \draw[->] (I1) to
node [swap] {$\iota_H$} (VE1); \draw[->] (VE1) to node [swap] {$(\alpha_V
\times \alpha_E)$} (VE2); \draw[->] (I2) to node {$\iota_G$} (VE2);
\end{tikzpicture}
\end{equation*}

\begin{lemma}
Let $\overrightarrow{P}_{k}$ be a directed vertex path graph of length $k$. $%
W$ is a vertex weak walk of length $k$ in $G$ if, and only if, there is an
incidence preserving map $\omega :\overrightarrow{P}_{k}\rightarrow G$ such
that $\omega (\overrightarrow{P}_{k})=W$.
\end{lemma}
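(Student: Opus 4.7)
The plan is to establish both directions by directly identifying the symbols in a weak walk with the component data of an incidence preserving map out of $\overrightarrow{P}_{k}$. First I would fix an explicit labeling of $\overrightarrow{P}_{k}$ as an oriented hypergraph: vertices $u_{0},u_{1},\ldots ,u_{k}$, edges $f_{1},\ldots ,f_{k}$, and for each $\ell$ two incidences $j_{\ell}^{-},j_{\ell}^{+}$ with $\iota _{\overrightarrow{P}_{k}}(j_{\ell}^{-})=(u_{\ell -1},f_{\ell})$ and $\iota _{\overrightarrow{P}_{k}}(j_{\ell}^{+})=(u_{\ell},f_{\ell})$. This endows $\overrightarrow{P}_{k}$ with exactly the same alternating vertex-edge backbone as a vertex weak walk of length $k$, so the identification of the two structures becomes a matter of matching indices.

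For the forward direction, take $W=(a_{0},i_{1},a_{1},i_{2},\ldots ,i_{2k},a_{2k})$, where the length-$k$ condition forces $2k$ incidences and $a_{2m}\in V_{G}$, $a_{2m+1}\in E_{G}$. Define $\omega _{V}(u_{m}):=a_{2m}$, $\omega _{E}(f_{\ell}):=a_{2\ell -1}$, $\omega _{\mathcal{I}}(j_{\ell}^{-}):=i_{2\ell -1}$, and $\omega _{\mathcal{I}}(j_{\ell}^{+}):=i_{2\ell}$. The weak walk condition $\{(\mathrm{proj}_{V}\circ \iota )(i_{h}),(\mathrm{proj}_{E}\circ \iota )(i_{h})\}=\{a_{h-1},a_{h}\}$ applied at $h=2\ell -1$ and $h=2\ell$ is precisely the statement that $\iota _{G}\circ \omega _{\mathcal{I}}=(\omega _{V}\times \omega _{E})\circ \iota _{\overrightarrow{P}_{k}}$ on each $j_{\ell}^{\pm}$, which is commutativity of the defining square. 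By construction, $\omega (\overrightarrow{P}_{k})=W$.

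For the converse, given an incidence preserving map $\omega :\overrightarrow{P}_{k}\rightarrow G$, I would simply read off the image sequence
$$W:=(\omega _{V}(u_{0}),\omega _{\mathcal{I}}(j_{1}^{-}),\omega _{E}(f_{1}),\omega _{\mathcal{I}}(j_{1}^{+}),\omega _{V}(u_{1}),\ldots ,\omega _{\mathcal{I}}(j_{k}^{+}),\omega _{V}(u_{k})).$$
This sequence is manifestly alternating between vertices and edges of $G$, it starts and ends at vertices, and it contains $2k$ incidences. Commutativity of the diagram applied to each $j_{\ell}^{\pm}$ provides exactly the local incidence condition needed between consecutive entries, so $W$ is a vertex weak walk of length $k$, and its underlying set equals $\omega (\overrightarrow{P}_{k})$.

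The main work here is purely bookkeeping: aligning the indexing of vertices, edges, and incidences in $\overrightarrow{P}_{k}$ with the alternating structure of $W$. The one conceptual point worth emphasizing is that $\omega $ is not required to be injective on any of its three components, which matches the fact that a weak walk may repeat vertices, edges, or even incidences (as in a backstep $(v,i,e,i,v)$, produced when $\omega _{\mathcal{I}}(j_{\ell}^{-})=\omega _{\mathcal{I}}(j_{\ell}^{+})$).
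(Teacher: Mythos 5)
Your proposal is correct and follows essentially the same route as the paper: both directions are handled by the explicit index-matching bijection between the alternating vertex--edge--incidence structure of $\overrightarrow{P}_{k}$ and that of a weak walk, with commutativity of the defining square verified termwise. Your closing remark about non-injectivity (allowing repeated incidences, hence backsteps) matches the paper's observation that the image is determined by a sequence of possibly repeating incidences.
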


\begin{proof}

Let $\overrightarrow{P}_{k}$ be the directed vertex path 
\begin{equation*}
(v_{0},i_{1},e_{1},i_{2},v_{1},i_{3},e_{2},...,e_{k},i_{2k},v_{k})
\end{equation*} 
and let
\begin{equation*}
W=(a_{0},j_{1},a_{1},j_{2},a_{2},j_{3},a_{3},...,a_{2k-1},j_{2k},a_{2k})
\end{equation*} 
be a vertex weak walk in $G$. The map $\omega :\overrightarrow{P}%
_{k}\rightarrow G$ where $\omega (v_{b})=a_{2b}$, $\omega (e_{b})=a_{2b-1}$, 
$\omega (i_{b})=j_{b}$ is the unique incidence preserving map from $%
\overrightarrow{P}_{k}$ to $W$.

Moreover, if $\overrightarrow{P}_{k}$ maps into $G$ via an incidence
preserving map $\omega $, then $\omega (\overrightarrow{P}_{k})$ is
determined by the sequence of (possibly repeating) incidences in $G$, hence
is a weak walk in $G$. \qed
\end{proof}

From here we are able to restate the weak walk Theorem for $\mathbf{L}_{G}$
from \cite{AH1, OHHar} in terms of incidence preserving maps.

\begin{theorem}
\label{WWT}The $(v,w)$-entry of $\mathbf{L}_{G}$ is $\dsum\limits_{\omega
\in \Omega _{1}}-sgn(\omega (\overrightarrow{P}_{1}))$, where $%
\overrightarrow{P}_{1}=(t,i,e,j,h)$ and $\Omega _{1}$ is the set of all
incidence preserving maps $\omega :\overrightarrow{P}_{1}\rightarrow G$ with 
$\omega (t)=v$ and $\omega (h)=w$.
\end{theorem}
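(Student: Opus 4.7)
The plan is to combine the preceding lemma (bijection between incidence preserving maps $\overrightarrow{P}_k \to G$ and vertex weak walks of length $k$ in $G$) with the weak walk count for $-\mathbf{L}_G$ from Theorem \ref{WWL}(3), which I would reformulate as a signed sum over walks rather than a difference of two walk counts.

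First, I would specialize the preceding lemma to $k=1$: each incidence preserving map $\omega : \overrightarrow{P}_1 \to G$ with $\omega(t)=v$ and $\omega(h)=w$ produces a unique vertex weak walk $\omega(\overrightarrow{P}_1) = W$ of length $1$ from $v$ to $w$, and conversely every such weak walk arises from exactly one such $\omega$. This gives a bijection between $\Omega_1$ and the set of length-$1$ vertex weak walks from $v$ to $w$ in $G$. Because $W$ is fully determined by its incidence sequence, the definition of $sgn(W)$ applied to $\omega(\overrightarrow{P}_1)$ coincides with the sign of $W$ as a weak walk in $G$.

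Next, I would rewrite Theorem \ref{WWL}(3). Since every length-$1$ weak walk is either positive or negative (the sign is $\pm 1$), the $(v,w)$-entry of $-\mathbf{L}_G$, which is the number of positive length-$1$ weak walks from $v$ to $w$ minus the number of negative ones, can be rewritten as $\sum_W sgn(W)$ where the sum is over all length-$1$ vertex weak walks from $v$ to $w$ in $G$. Applying the bijection from the previous step gives
\begin{equation*}
(-\mathbf{L}_G)_{v,w} \;=\; \sum_{W} sgn(W) \;=\; \sum_{\omega \in \Omega_1} sgn(\omega(\overrightarrow{P}_1)),
\end{equation*}
and negating both sides yields the claim.

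I do not anticipate a substantive obstacle: the content of the theorem is essentially a translation of the already-established walk count of Theorem \ref{WWL}(3) into the incidence-preserving-map language set up by the preceding lemma. The only point requiring a moment of care is verifying that the sign attached to $\omega(\overrightarrow{P}_1)$ (viewed as an image set) agrees with the sign the walk would carry as an ordered sequence; this follows because the directed structure on $\overrightarrow{P}_1$ fixes the ordering of incidences in the image, so the formula $sgn(W) = (-1)^{\lfloor n/2 \rfloor} \prod \sigma(i_h)$ is well-defined on $\omega(\overrightarrow{P}_1)$.
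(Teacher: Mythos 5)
Your proposal is correct and follows exactly the route the paper intends: the paper gives no explicit proof, presenting Theorem \ref{WWT} as a direct restatement of Theorem \ref{WWL}(3) via the bijection of the preceding lemma, which is precisely your argument. The sign bookkeeping also checks out, since for $n=2$ incidences $sgn(W)=-\sigma(i_1)\sigma(i_2)$, so negating the signed walk sum for $-\mathbf{L}_G$ yields the stated formula for $\mathbf{L}_G$.
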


A \emph{contributor of }$G$ is an incidence preserving map from a disjoint
union of $\overrightarrow{P}_{1}$'s into $G$ defined by $c:\dcoprod%
\limits_{v\in V}\overrightarrow{P}_{1}\rightarrow G$ such that $c(t_{v})=v$
and $\{c(h_{v})\mid v\in V\}=V$. Let $\mathfrak{C}(G)$ denote the set of
contributors.

By definition, each contributor creates a natural bijection from the vertex
set to itself.

\begin{lemma}
\label{CisPerm}Every contributor $c$ is associated to a single permutation $%
\pi \in S_{V}$, the symmetric group on vertex set $V$.
\end{lemma}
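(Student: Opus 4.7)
The plan is to extract the permutation directly from the data of $c$ and check it has the required properties; no nontrivial machinery is needed beyond finiteness of $V$.

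First, I would define a candidate map $\pi : V \to V$ by setting
\[
\pi(v) := c(h_v),
\]
where $h_v$ denotes the head of the copy of $\overrightarrow{P}_1$ indexed by $v$ inside the coproduct $\coprod_{v\in V}\overrightarrow{P}_1$. Because $c$ is an honest function and the copies of $\overrightarrow{P}_1$ are disjoint, $h_v$ is well-defined for each $v$, so $\pi$ is a well-defined endofunction of $V$.

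Next, I would show $\pi$ is a bijection. Surjectivity is handed to us for free: it is exactly the second defining clause of a contributor, namely $\{c(h_v)\mid v\in V\}=V$. Since $V$ is finite, any surjection $V\to V$ is automatically injective, so $\pi\in S_V$. Uniqueness of the associated permutation is also immediate, because the coproduct is indexed by $V$ with $c(t_v)=v$ pinning down which copy of $\overrightarrow{P}_1$ is labeled by $v$; hence the rule $v\mapsto c(h_v)$ is completely determined by $c$, and no choices are made.

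The only point that requires any care, and thus the closest thing to an obstacle, is the bookkeeping of the two defining conditions: the tail condition $c(t_v)=v$ is what lets us identify the index set of the coproduct with $V$ itself (so that $\pi$ is defined on $V$), while the head condition $\{c(h_v)\mid v\in V\}=V$ is what upgrades $\pi$ from an arbitrary endofunction of $V$ to a surjection, which finiteness of $V$ then converts to a permutation.
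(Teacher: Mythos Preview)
Your argument is correct and is exactly what the paper has in mind: the paper simply states ``By definition, each contributor creates a natural bijection from the vertex set to itself'' and records the lemma without further proof. Your proposal just spells out this one-line observation, extracting $\pi(v)=c(h_v)$ and using the head condition plus finiteness of $V$ to conclude bijectivity.
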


Two contributors that are associated to the same permutation $\pi $ are said
to be $\pi $\emph{-permutomorphic}, let $\mathfrak{C}_{\pi }(G)$ denote the
equivalence class of $\pi $-permutomorphic contributors. Observe that $\pi $%
-permutomorphic contributors need not be isomorphic for if $c(h_{v})=v$ the
associated algebraic $1$-cycle may be a result of either a loop or a
backstep. Similarly, an algebraic $2$-cycle may arise from a repeated
adjacency or two distinct adjacencies. An algebraic $2$-cycle that
corresponds to a repeated adjacency is called a \emph{degenerate 2-circle}.
This is an important distinction because a $2$-circle has two possible cycle
orientations, while a degenerate $2$-circle has only one orientation.

\begin{lemma}
\label{CPermareDiff}Permutomorphic contributors are isomorphic up to
interchanging backsteps and loops, and interchanging of $2$-circles and
degenerate $2$-circles.
\end{lemma}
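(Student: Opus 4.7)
The plan is to factor the associated permutation $\pi$ into disjoint cycles on $V$ and analyze a contributor one cycle at a time. Since the domain $\coprod_{v\in V}\overrightarrow{P}_{1}$ is a disjoint union indexed by $V$, any contributor $c$ restricts naturally to the sub-coproduct consisting of those copies of $\overrightarrow{P}_{1}$ indexed by the vertices in a single cycle of $\pi$. Two $\pi$-permutomorphic contributors $c_{1},c_{2}$ induce identical vertex-level bijections, so it suffices to compare these restricted \emph{pieces} cycle by cycle and argue that, modulo the two stated exceptions, they are isomorphic as sub-oriented-hypergraphs.

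For a cycle $\tau=(v_{1},v_{2},\ldots,v_{k})$ of length $k\geq 3$, the restriction of $c$ produces the $k$ directed adjacencies $v_{1}\to v_{2},\, v_{2}\to v_{3},\ldots, v_{k}\to v_{1}$. Because the $v_{j}$ are pairwise distinct, the unordered endpoint pairs $\{v_{j},v_{j+1}\}$ are pairwise distinct, so these are $k$ distinct adjacencies of $G$ whose union forms a $k$-circle with a prescribed cyclic orientation; any other $\pi$-permutomorphic contributor yields a $k$-circle on the same cyclic vertex sequence, hence an isomorphic piece. For a fixed point $v$ of $\pi$, the single copy of $\overrightarrow{P}_{1}=(t,i,e,j,h)$ assigned to $v$ satisfies $c(t)=c(h)=v$, and incidence preservation forces $c(i)$ and $c(j)$ to be parallel incidences at $v$ on a common edge; the dichotomy $c(i)=c(j)$ versus $c(i)\neq c(j)$ is precisely the backstep/loop split. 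For a transposition $\tau=(v,w)$, the two copies of $\overrightarrow{P}_{1}$ produce directed adjacencies $v\to w$ and $w\to v$, which, since an adjacency as an unordered set carries both directed orientations, can be realized either by a single underlying adjacency of $G$ traversed in both directions (a degenerate $2$-circle) or by two distinct underlying adjacencies (a proper $2$-circle).

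The main subtlety is the $k\geq 3$ step, where distinctness of consecutive endpoint pairs precludes any analogue of the degenerate-circle phenomenon: no single adjacency of $G$ can discharge two different copies of $\overrightarrow{P}_{1}$ in the restriction, because an adjacency is pinned to a fixed unordered vertex pair. Conversely, $k=1$ and $k=2$ are exactly the cases where the tail/head identifications collapse into a single endpoint pair, opening the door to the two listed exceptions. I expect no further technical hurdle: distinct cycles of $\pi$ act on disjoint subsets of $V$, so the pieces are pairwise disjoint and the piecewise isomorphisms assemble into an isomorphism of the full contributor images.
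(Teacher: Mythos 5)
Your proof is correct and is essentially the argument the paper intends: the paper gives no formal proof of this lemma, stating it as an immediate consequence of the preceding observation that an algebraic $1$-cycle may be realized as a loop or a backstep and an algebraic $2$-cycle as a repeated or non-repeated adjacency, and your cycle-by-cycle decomposition of $\pi$ with the key point that an adjacency is pinned to an unordered vertex pair (so no degeneracy can occur for cycles of length $k\geq 3$) is exactly the reasoning that observation gestures at. The only caveat is in your $k\geq 3$ step: two permutomorphic contributors realizing the same $k$-cycle need not have isomorphic images as sub-incidence-structures of $G$, since one may route all $k$ adjacencies through a single large edge while another uses $k$ distinct edges (the paper itself later notes that such contributor-circles need not be circles of $G$); the lemma should be read as asserting isomorphism of the component decomposition into backsteps, loops, degenerate $2$-circles, and $k$-circles, which is what your argument actually establishes and is all that the subsequent counting theorems require.
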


Figure \ref{PermutoClass} below contains some contributors of $G$ from
Figure \ref{ExOH} grouped by permutation classes.

\begin{figure}[H]
\centering
\includegraphics{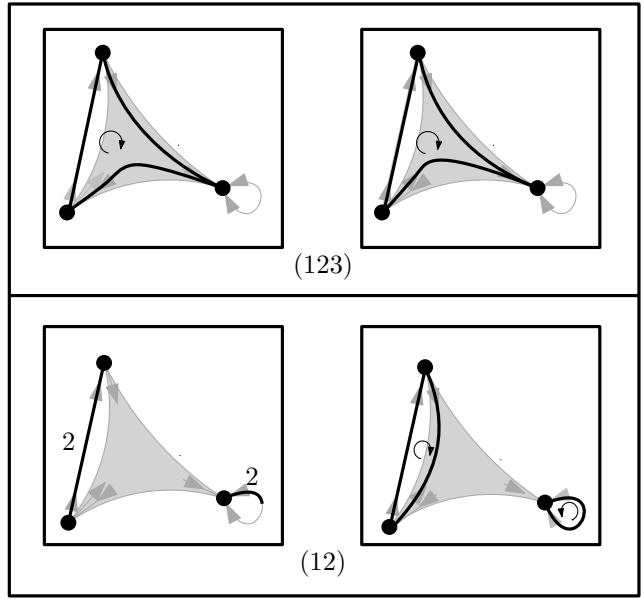}
\caption{Examples of elements in $\mathfrak{C}_{(123)}(G)$ and $\mathfrak{C}%
_{(12)}(G)$ for the oriented hypergraph in Figure \protect\ref{ExOH}.}
\label{PermutoClass}
\end{figure}

Let $\mathfrak{C}_{=k}(G)$ denote the set of contributors of $G$ with
exactly $k$ backsteps, and let $\mathfrak{C}_{\geq k}(G)$ denote the set of
contributors of $G$ with at least $k$ backsteps. Define $\widehat{\mathfrak{C%
}}_{=k}(G)$ as the collection of sub-contributors of $G$ formed from the
contributors of $\mathfrak{C}_{=k}(G)$ by deleting exactly $k$ backsteps
while retaining the $k$ isolated vertices. Similarly, define $\widehat{%
\mathfrak{C}}_{\geq k}(G)$ as the collection of sub-contributors of $G$
formed from the contributors of $\mathfrak{C}_{\geq k}(G)$ by deleting
exactly $k$ backsteps while retaining the $k$ isolated vertices. Observe
that $\mathfrak{C}_{\geq 0}(G)=\widehat{\mathfrak{C}}_{\geq 0}(G)=\mathfrak{C%
}(G)$, while $\widehat{\mathfrak{C}}_{=k}(G)$ generalizes basic figures of a
graph.

\begin{lemma}
For a graph $G$, $\widehat{\mathfrak{C}}_{=k}(G)$ is the set of oriented
basic figures with $k$ isolated vertices.
\end{lemma}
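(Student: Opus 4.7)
The plan is to construct an explicit bijection between $\widehat{\mathfrak{C}}_{=k}(G)$ and the set of oriented basic figures of $G$ with exactly $k$ isolated vertices, using the permutation structure from Lemma \ref{CisPerm} together with the classification in Lemma \ref{CPermareDiff}. For the forward direction, given $\hat{c} \in \widehat{\mathfrak{C}}_{=k}(G)$ obtained from some $c \in \mathfrak{C}_{=k}(G)$, I read off the associated permutation $\pi \in S_V$. The $k$ deleted backsteps correspond to $k$ fixed points of $\pi$ that become the $k$ isolated vertices of the image. Each remaining cycle of $\pi$ contributes an elementary figure of the basic figure: a fixed point realized by a graph loop yields an oriented $C_1$; a transposition of $\pi$ realized by a single shared edge yields a link (the degenerate $2$-circle case of Lemma \ref{CPermareDiff}); a transposition realized by two distinct parallel edges yields an oriented $C_2$; and an $n$-cycle with $n\geq 3$ yields an oriented $C_n$, the orientation coming from the directedness of the $\overrightarrow{P}_1$'s. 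Disjointness is immediate since the cycles of $\pi$ partition $V$.

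For the inverse, given an oriented basic figure $U$ with $k$ isolated vertices, I build a sub-contributor component by component: each of the $k$ isolated vertices stays isolated; a loop at $v$ with its chosen orientation determines $v$'s unique $\overrightarrow{P}_1$ on that loop edge; a link $\{v,w\}$ supported on an edge $e$ determines the pair of $\overrightarrow{P}_1$'s at $v$ and $w$ that share $e$, yielding a degenerate $2$-circle; and an oriented $C_n$ with $n\geq 2$, traversed in its chosen direction, determines each of its $n$ vertices' $\overrightarrow{P}_1$'s along the cycle. Since $G$ is a graph, each edge has size $2$, so every choice in the construction is uniquely forced once the elementary figure and its orientation are fixed. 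Reinserting backsteps at the $k$ isolated vertices recovers an element of $\mathfrak{C}_{=k}(G)$ by definition, so the resulting sub-contributor lies in $\widehat{\mathfrak{C}}_{=k}(G)$.

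The main obstacle I expect is the careful bookkeeping demanded by Lemma \ref{CPermareDiff}: ensuring that algebraic $1$-cycles of $\pi$ arising from genuine graph loops are \emph{not} interpreted as isolated vertices (only the $k$ backstep $1$-cycles are deleted by construction), and ensuring that degenerate $2$-circles are matched with links --- which carry no orientation data --- while genuine $2$-circles are matched with oriented $C_2$'s. Once this case analysis is stated cleanly, the verification that the two maps are mutual inverses reduces to the observation that every combinatorial datum (edge image, incidence assignment, cycle direction) is uniquely determined by its image under either map.
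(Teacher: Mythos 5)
Your proof is correct: the paper states this lemma without proof, treating it as an immediate unpacking of the definitions, and your explicit bijection (cycles of $\pi$ versus elementary figures, with the backstep/loop and link/$C_2$ distinctions from Lemma \ref{CPermareDiff} handled exactly as the paper intends) is precisely the content being left implicit. The only detail worth making explicit is that reinserting a backstep at each isolated vertex is always possible because $G$ is assumed to have no isolated vertices, so every vertex admits at least one backstep.
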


A \emph{cycle cover} of a graph $G$ is a union of disjoint cycles which are
subgraphs of $G$ and contain all of the vertices of $G$. Notice that the
cycle covers of a graph are simply the contributors that do not contain any
backsteps.

\begin{lemma}
For a graph $G$, $\widehat{\mathfrak{C}}_{=0}(G)=\mathfrak{C}_{=0}(G)$ is
the set of oriented cycle covers.
\end{lemma}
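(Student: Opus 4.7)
The statement has two parts: the equality $\widehat{\mathfrak{C}}_{=0}(G)=\mathfrak{C}_{=0}(G)$, and the identification of this set with the oriented cycle covers of $G$. The first equality is immediate from the definitions, since $\widehat{\mathfrak{C}}_{=0}(G)$ is obtained from $\mathfrak{C}_{=0}(G)$ by deleting exactly $0$ backsteps — an empty operation — so I would only need to dispense with this in a single sentence.

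The substantive content is the identification with oriented cycle covers, which I would prove by building a bijection. Let $c\in\mathfrak{C}_{=0}(G)$. By Lemma \ref{CisPerm}, $c$ is associated to a permutation $\pi\in S_V$, defined by $\pi(v)=c(h_v)$. Decompose $\pi$ into its disjoint cycle factorization $\pi=\pi_1\pi_2\cdots\pi_s$. For each factor $\pi_r=(v_{r,1}\,v_{r,2}\,\cdots\,v_{r,k_r})$ and each index $\ell$, the image $c(\overrightarrow{P}_1)$ at $t_{v_{r,\ell}}$ is a length-$1$ vertex walk in $G$ from $v_{r,\ell}$ to $v_{r,\ell+1}$. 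Because $c$ contains no backsteps, the incidences used at each end of this image are distinct; in the graph setting this means that when $k_r=1$ the image is a loop at $v_{r,1}$, and when $k_r\geq 2$ it is a genuine edge of $G$. Concatenating these images around each $\pi_r$ produces a vertex-disjoint directed closed walk in $G$ whose underlying subgraph is a cycle (of length $1$, i.e.\ a loop, when $k_r=1$). The union over $r$ covers $V$ exactly once, so this is an oriented cycle cover of $G$.

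For the reverse direction, I would take an arbitrary oriented cycle cover $\mathcal{Q}=Q_1\sqcup\cdots\sqcup Q_s$ of $G$ and define $c\colon\bigsqcup_{v\in V}\overrightarrow{P}_1\to G$ by sending, for each vertex $v$ lying on the cycle $Q_r$, the $\overrightarrow{P}_1$ at $t_v$ to the unique directed edge of $Q_r$ leaving $v$. Since cycles in a graph use distinct incidences at their two endpoints, no backstep is produced; since the $Q_r$'s are vertex-disjoint and cover $V$, the map $v\mapsto c(h_v)$ is a permutation, so $c\in\mathfrak{C}_{=0}(G)$. One then verifies that these two constructions are mutual inverses, using that the incidence structure of $c$ at each $v$ is completely determined by the outgoing edge of the cycle cover at $v$.

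The main obstacle — and really the only subtle point — is the bookkeeping around the graph-theoretic edge cases: verifying that excluding backsteps together with the graph hypothesis (every edge has size $2$) forces each fiber of the permutation decomposition to be realized by a bona fide cycle subgraph, including the treatment of loops as length-$1$ cycles and multi-edges (where two different cycles could traverse parallel edges between the same endpoints). I would handle this by appealing to Lemma \ref{CPermareDiff} to reduce the classification of $\pi$-permutomorphic contributors in a graph (no size $\geq 3$ edges, hence no degenerate $2$-circles arising from hyperedge incidences beyond the two involved) to exactly the choice of oriented cycle subgraph realizing $\pi$.
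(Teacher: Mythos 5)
Your proposal is correct and takes essentially the same route as the paper, which states this lemma without a formal proof, treating it as immediate from the preceding observation that ``the cycle covers of a graph are simply the contributors that do not contain any backsteps.'' Your bijection between backstep-free contributors and oriented cycle covers, including the care taken with loops, degenerate $2$-circles, and parallel edges, is just the explicit unpacking of that observation and introduces no gap.
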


We say two elements of $\widehat{\mathfrak{C}}_{=k}(G)$ (resp. $\widehat{%
\mathfrak{C}}_{\geq k}(G)$) are $\pi $\emph{-permutomorphic} if they extend
to the same $\pi $-permutomorphism class $\mathfrak{C}_{\pi }(G)$ via the
introduction of $k$ backsteps that exist in $G$. Let $\widehat{\mathfrak{C}}%
_{=k,\pi }(G)$ (resp. $\widehat{\mathfrak{C}}_{\geq k,\pi }(G)$) be the set
of $\pi $-permutomorphic elements of $\widehat{\mathfrak{C}}_{=k}(G)$ (resp. 
$\widehat{\mathfrak{C}}_{\geq k}(G)$).

\section{General Coefficient Theorems \label{main}}

\subsection{Permanents and Determinants}

Let $ec(c)$, $oc(c)$, $pc(c)$ and $nc(c)$ be the number of even, odd,
positive, and negative circles in a contributor $c$, respectively. The
necessity for these counts are related to the differences between balanced
(all circles positive) signed graphs and the classical development of
hypergraph theory. An ordinary graph may be regarded as a signed graph with
all edges positive, hence all circles are positive, which leads to many
graph theoretic theorems having a balanced\ signed graphic analogs (see \cite%
{SG, OSG}). The classical development of hypergraphs (see \cite{Berge1,
Berge2}) uses a $\{0,1\}$-incidence matrix equivalent where a circle is
positive if, and only if, it is even. Oriented hypergraphs allow for a
locally signed graphic approach to separate the concepts of even from
positive and odd from negative; additional combinatorial properties and
applications can be found in \cite{DBM, BM, ShiBrz}.

\begin{theorem}
\label{LAPD}Let $G$ be an oriented hypergraph with adjacency matrix $\mathbf{%
A}_{G}$ and Laplacian matrix $\mathbf{L}_{G}$, then

\begin{enumerate}
\item $\mathrm{perm}(\mathbf{L}_{G})=\dsum\limits_{c\in \mathfrak{C}_{\geq
0}(G)}(-1)^{oc(c)+nc(c)}$,

\item $\det (\mathbf{L}_{G})=\dsum\limits_{c\in \mathfrak{C}_{\geq
0}(G)}(-1)^{pc(c)}$,

\item $\mathrm{perm}(\mathbf{A}_{G})=\dsum\limits_{c\in \mathfrak{C}%
_{=0}(G)}(-1)^{nc(c)}$,

\item $\det (\mathbf{A}_{G})=\dsum\limits_{c\in \mathfrak{C}%
_{=0}(G)}(-1)^{ec(c)+nc(c)}$.
\end{enumerate}
\end{theorem}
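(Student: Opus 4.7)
The plan is to expand each of the four matrix functions via the Leibniz-type formula and substitute the weak-walk representation of matrix entries from Theorem \ref{WWT} for Laplacian entries and from Theorem \ref{WWL}(2) for adjacency entries. Distributing products over sums turns $\prod_{v}(\cdot)_{v,\pi(v)}$ into an inner sum indexed by tuples $(\omega_v)_{v\in V}$ of length-$1$ incidence preserving maps, which is precisely an element of $\mathfrak{C}_\pi(G)$ by Lemma \ref{CisPerm}. Collecting over $\pi$ rewrites each expansion as a single sum over $\mathfrak{C}(G)$ in the Laplacian case and over $\mathfrak{C}_{=0}(G)$ in the adjacency case, since adjacency entries assign weight $0$ to backstep weak walks.

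With this setup, the per-contributor weight factors along the cycles of $\pi_c$. For a circle $C$ of length $\ell$, the product of its $2\ell$ incidence signs equals $(-1)^\ell sgn(C)$ by the weak-walk sign formula, while a backstep at a fixed point contributes $\sigma(i)^2=+1$ and is therefore combinatorially invisible. Writing $o(C)=1$ if $\ell$ is odd and $n(C)=1$ if $C$ is negative, the identity $(-1)^\ell sgn(C)=(-1)^{o(C)+n(C)}$ multiplied over the circles of $c$ yields $(-1)^{oc(c)+nc(c)}$, proving~(1). For~(2) one multiplies additionally by $sgn(\pi_c)=(-1)^{|V|-m(c)}$, where $m(c)$ is the total cycle count of $\pi_c$, and uses $|V|\equiv oc(c)\pmod 2$, $m(c)\equiv bs(c)+pc(c)+nc(c)\pmod 2$, together with $\prod_C sgn(C)=(-1)^{bs(c)+nc(c)}$ (where $bs(c)$ is the number of backsteps of $c$) to collapse the weight to $(-1)^{pc(c)}$. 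For~(3) and~(4) the same per-circle analysis applies without the leading $(-1)^\ell$ factor, because adjacency entries carry weight $+sgn(\omega)$ rather than $-sgn(\omega)$; this gives $(-1)^{nc(c)}$ for the permanent, and combining with $sgn(\pi_c)$ via $|V|-m(c)\equiv ec(c)\pmod 2$ gives $(-1)^{ec(c)+nc(c)}$ for the determinant.

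The main obstacle is consistent bookkeeping across the three degenerate length-$1$ and length-$2$ phenomena: backsteps, loops, and degenerate $2$-circles. The Laplacian formulas must absorb backsteps while the adjacency formulas must exclude them; loops are genuine length-$1$ circles whose sign must be tracked through the parity rule above; and degenerate $2$-circles must be verified to produce the same weight as non-degenerate $2$-circles via the same incidence-sign calculation (noting that a degenerate $2$-circle has weak-walk sign $+1$ automatically, since $\sigma(i)^2\sigma(j)^2=1$). Once one checks that backsteps uniformly contribute $+1$ in the Laplacian expansion and that the per-circle identity $(-1)^\ell sgn(C)=(-1)^{o(C)+n(C)}$ holds across all these configurations, each of the four claims reduces to the same short parity exercise.
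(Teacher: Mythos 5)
Your proposal follows essentially the same route as the paper's proof: Leibniz expansion, substitution of the weak-walk entries, distribution of the product over the inner sums to reindex by contributors grouped by permutomorphism class, and a circle-by-circle evaluation of the weight --- your identity $(-1)^{\ell}sgn(C)=(-1)^{o(C)+n(C)}$ is precisely the paper's ``factor out $-1$ for each adjacency, then for each negative adjacency'' step, and your handling of backsteps, loops, and degenerate $2$-circles matches the remark following the theorem. The one slip is in your part~(2) bookkeeping: since $|V|=\sum_{C}\ell(C)+bs(c)$, the correct congruence is $|V|\equiv oc(c)+bs(c)\pmod{2}$, not $|V|\equiv oc(c)$, and likewise $\prod_{C}sgn(C)=(-1)^{bs(c)+nc(c)}$ holds only if the product runs over the backsteps (each of weak-walk sign $-1$) as well as the circles. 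These two discrepancies are both off by the parity of $bs(c)$ and cancel, and the composite identity you actually need, $sgn(\pi_{c})=(-1)^{|V|-m(c)}=(-1)^{ec(c)}$ --- the paper's $(-1)^{ec(\pi)}$, with even cycles of $\pi$ identified with even circles of $c$ because backsteps and loops are both odd $1$-cycles --- is correct, so the collapse of the weight to $(-1)^{pc(c)}$ goes through.
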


\begin{proof}

\textit{Proof of 1. }From the definition and Theorem \ref{WWT} we have%
\begin{equation*}
\mathrm{perm}(\mathbf{L}_{G})=\sum\limits_{\pi \in S_{V}}\prod\limits_{v\in
V}\dsum\limits_{\omega \in \Omega _{1,\pi }}-sgn(\omega (\overrightarrow{P}%
_{1}))\text{,}
\end{equation*}%
where $\Omega _{1,\pi }$ is the set of all incidence preserving maps $\omega
:\overrightarrow{P}_{1}\rightarrow G$ with $\omega (t)=v$ and $\omega
(h)=\pi (v)$.

Distributing the inner sums for all $v\in V$ (without evaluating the inner
sums), passes from incidence preserving maps $\omega :\overrightarrow{P}%
_{1}\rightarrow G$ with $\omega (t)=v$ and $\omega (h)=\pi (v)$ to incidence
preserving maps $c:\coprod\limits_{v\in V}\overrightarrow{P}_{1}\rightarrow
G $ with $\omega (t_{v})=v$, $\omega (h_{v})=\pi (v)$, and $\{\omega(h_v) \mid v \in V\}=V$. Collecting
permutomorphic contributors gives:%
\begin{equation*}
\mathrm{perm}(\mathbf{L}_{G})=\sum\limits_{\pi \in S_{V}}\sum\limits_{c\in 
\mathfrak{C}_{\pi }(G)}\prod\limits_{v\in V}\sigma (c(i_{v}))\sigma
(c((j_{v}))\text{.}
\end{equation*}

To calculate the product $\prod\limits_{v\in V}\sigma (c(i_{v}))\sigma
(c((j_{v}))$ first factor out $-1$ for each adjacency determined by $c$,
producing a factor of $(-1)^{oc(c)}$. This forces every negative adjacency
in $G$ appear as a value of $-1$ in $\mathbf{L}_{G}$ and every positive
adjacency in $G$ to appear as a $+1$, since $\mathbf{L}_{G}=\mathbf{D}_{G}-%
\mathbf{A}_{G}$. Now factor out $-1$ from every adjacency that is negative
in $G$, producing a factor of $(-1)^{nc(c)}$ and a net factor of $%
(-1)^{oc(c)+nc(c)}$. Thus,%
\begin{equation*}
\mathrm{perm}(\mathbf{L}_{G})=\sum\limits_{\pi \in S_{V}}\sum\limits_{c\in 
\mathfrak{C}_{\pi }(G)}(-1)^{oc(c)+nc(c)}\text{,}
\end{equation*}%
and combine to get

\begin{equation*}
\mathrm{perm}(\mathbf{L}_{G})=\sum\limits_{c\in \mathfrak{C}%
(G)}(-1)^{oc(c)+nc(c)}\text{.}
\end{equation*}

\textit{Proof of 2. }With the inclusion of the sign of the permutation the
proof is identical until%
\begin{eqnarray*}
\det (\mathbf{L}_{G}) &=&\sum\limits_{\pi \in S_{V}}(-1)^{ec(\pi
)}\sum\limits_{c\in \mathfrak{C}_{\pi }(G)}(-1)^{oc(c)+nc(c)} \\
&=&\sum\limits_{\pi \in S_{V}}\sum\limits_{c\in \mathfrak{C}_{\pi
}(G)}(-1)^{ec(\pi )+oc(c)+nc(c)}\text{.}
\end{eqnarray*}

However, all of the even cycles in $\pi $ correspond to even circles in all
corresponding contributors so%
\begin{eqnarray*}
\det (\mathbf{L}_{G}) &=&\sum\limits_{\pi \in S_{V}}\sum\limits_{c\in 
\mathfrak{C}_{\pi }(G)}(-1)^{ec(c)+oc(c)+nc(c)} \\
&=&\sum\limits_{\pi \in S_{V}}\sum\limits_{c\in \mathfrak{C}_{\pi
}(G)}(-1)^{pc(c)} \\
&=&\sum\limits_{c\in \mathfrak{C}(G)}(-1)^{pc(c)}\text{.}
\end{eqnarray*}

\textit{Proofs of 3 and 4. }The adjacency matrix theorems in parts 3 and 4
are proved similarly, but with the following changes. First, the incidence
preserving maps $\omega $ are replaced with adjacency preserving maps $%
\omega ^{\prime }$ with the same properties. Second, there is no need to
factor out a negative from each adjacency as their signs are accurately
represented in the adjacency matrix. Finally, the sum is over backstep-free
contributors since all the only non-adjacency preserving $\omega $'s are
backsteps. \qed 
\end{proof}

\begin{remark}
As a result from the proof of part 1 of Theorem \ref{LAPD} a backstep is not
considered a circle of a contributor while a loop is considered a circle.
However, from the proof of part 2 any component of a contributor that
corresponds to an algebraic $2$-cycle is considered a circle of a
contributor.
\end{remark}

\subsection{Coefficient Theorems}

Let $\chi ^{D}(\mathbf{M},x):=\det (x\mathbf{I-M)}$ be the determinant-based
characteristic polynomial and $\chi ^{P}(\mathbf{M},x):=\mathrm{perm}(x%
\mathbf{I-M)}$ be the permanent-based characteristic polynomial. Let $bs(c)$
be the number of backsteps in contributor $c$.

\begin{theorem}
\label{MainT}Let $G$ be an oriented hypergraph with adjacency matrix $%
\mathbf{A}_{G}$ and Laplacian matrix $\mathbf{L}_{G}$, then

\begin{enumerate}
\item $\chi ^{P}(\mathbf{A}_{G},x)=\dsum\limits_{k=0}^{\left\vert
V\right\vert }\left( \sum\limits_{c\in \widehat{\mathfrak{C}}%
_{=k}(G)}(-1)^{oc(c)+nc(c)}\right) x^{k}$,

\item $\chi ^{D}(\mathbf{A}_{G},x)=\dsum\limits_{k=0}^{\left\vert
V\right\vert }\left( \sum\limits_{c\in \widehat{\mathfrak{C}}%
_{=k}(G)}(-1)^{pc(c)}\right) x^{k}$,

\item $\chi ^{P}(\mathbf{L}_{G},x)=\dsum\limits_{k=0}^{\left\vert
V\right\vert }\left( \sum\limits_{c\in \widehat{\mathfrak{C}}_{\geq
k}(G)}(-1)^{nc(c)+bs(c)}\right) x^{k}$,

\item $\chi ^{D}(\mathbf{L}_{G},x)=\dsum\limits_{k=0}^{\left\vert
V\right\vert }\left( \sum\limits_{c\in \widehat{\mathfrak{C}}_{\geq
k}(G)}(-1)^{ec(c)+nc(c)+bs(c)}\right) x^{k}$.
\end{enumerate}
\end{theorem}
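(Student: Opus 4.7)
The plan is to reduce each part to Theorem~\ref{LAPD} by expanding the characteristic polynomial in principal minors (resp.\ permanents) and matching the resulting terms with sub-contributor counts. Setting $n = |V|$, the starting point is the standard identities
\begin{equation*}
\det(xI - M) = \sum_{k=0}^{n} (-1)^{n-k}\Bigl(\sum_{|S|=n-k} \det M[S]\Bigr)x^{k},\qquad
\mathrm{perm}(xI - M) = \sum_{k=0}^{n} (-1)^{n-k}\Bigl(\sum_{|S|=n-k} \mathrm{perm}(M[S])\Bigr)x^{k},
\end{equation*}
obtained from the Leibniz (resp.\ permanent) expansion by choosing either $x$ or $-M_{ii}$ at each diagonal entry. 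Since weak deletion of $V \setminus S$ preserves every incidence at each vertex of $S$, one has $\mathbf{A}_G[S] = \mathbf{A}_{G|_S}$ and $\mathbf{L}_G[S] = \mathbf{L}_{G|_S}$, where $G|_S$ denotes the sub-oriented-hypergraph obtained by weak-deleting $V \setminus S$. Applying the appropriate part of Theorem~\ref{LAPD} to each principal permanent and determinant then rewrites the coefficient of $x^k$ as a signed sum over contributors of the various $G|_S$.

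The second step is a bijection between this indexed union of contributors and the target collection $\widehat{\mathfrak{C}}_{=k}(G)$ or $\widehat{\mathfrak{C}}_{\geq k}(G)$. Each contributor $c \in \mathfrak{C}_{=0}(G|_S)$ (for the adjacency parts) or $c \in \mathfrak{C}(G|_S)$ (for the Laplacian parts) extends canonically to a sub-contributor $\hat c$ of $G$ by designating every vertex of $V \setminus S$ as isolated; conversely, restricting any element of $\widehat{\mathfrak{C}}_{=k}(G)$ or $\widehat{\mathfrak{C}}_{\geq k}(G)$ to its non-isolated vertex set recovers a contributor of the corresponding $G|_S$. Under the standing assumption that $G$ has no isolated vertices, every such sub-contributor arises by backstep deletion from a parent in $\mathfrak{C}_{=k}(G)$ or $\mathfrak{C}_{\geq k}(G)$, and the statistics $oc, ec, nc, pc, bs$ are all preserved under this lift.

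Finally, I would absorb the prefactor $(-1)^{n-k} = (-1)^{|S|}$ into the contributor sign via the parity identity
\begin{equation*}
|S| \;\equiv\; bs(c) + oc(c) \pmod{2},
\end{equation*}
which holds because $|S|$ equals $bs(c)$ plus the total length of the circles of $c$, and each even circle contributes an even length while each odd circle contributes an odd one. Combined with the tautology $pc(c) + nc(c) = ec(c) + oc(c)$, this congruence converts each exponent coming from Theorem~\ref{LAPD} into the exponent claimed in the statement: $(-1)^{|S|+nc} = (-1)^{oc+nc}$ for part~1, $(-1)^{|S|+ec+nc} = (-1)^{pc}$ for part~2, $(-1)^{|S|+oc+nc} = (-1)^{nc+bs}$ for part~3, and $(-1)^{|S|+pc} = (-1)^{ec+nc+bs}$ for part~4. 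Summing along the bijection then yields the four coefficient identities.

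The principal challenge is this uniform sign reconciliation across the four cases: the naive exponent arising from Theorem~\ref{LAPD} differs substantively from the exponent in each target formula, and a single lever (the parity identity above, which expresses the extrinsic parameter $|S|$ in terms of intrinsic contributor statistics) is needed to reconcile all four cases simultaneously. A secondary technicality is confirming that the $\widehat{\mathfrak{C}}$ collections are viewed as sets of distinct sub-contributor maps (rather than multisets indexed by deletion history), so that the lift-restrict correspondence becomes a genuine bijection with the indexed union of contributors on the various $G|_S$.
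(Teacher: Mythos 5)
Your proposal is correct and follows essentially the same route as the paper: the paper's choice-function expansion of $\mathrm{perm}/\det(x\mathbf{I}-\mathbf{M})$ over permutations is precisely an in-line derivation of your principal-minor/permanent identity, its weak-deletion $G\Nwseline \overline{U}_{\beta}$ is your $G|_{S}$, and its extension diagram is your lift--restrict bijection onto $\widehat{\mathfrak{C}}_{=k}(G)$ and $\widehat{\mathfrak{C}}_{\geq k}(G)$. Your sign reconciliation via $|S|\equiv bs(c)+oc(c)\pmod{2}$ together with $pc(c)+nc(c)=ec(c)+oc(c)$ is a clean repackaging of the paper's per-adjacency factoring of $-1$; indeed the paper records the same parity identity, in the form $bs(c)\equiv oc(c)+|V|$, in a remark immediately after the theorem.
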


Before proving the theorem, observe that the presentation of parts 1-4 have
adjacency and Laplacian matrices reversed from Theorem \ref{LAPD}; this is
done as the proofs are parallel based on the appearance of $-\mathbf{A}$.

\begin{proof}

\textit{Proof of 1.} First introduce a choice function $\alpha $ for a given
permutation $\pi $ and vertex $v$ where $\alpha :v\rightarrow \left\{ x\cdot
\delta (v,\pi (v)),\dsum\limits_{\omega ^{\prime }\in \Omega _{1,\pi
}^{\prime }}sgn(\omega ^{\prime }(\overrightarrow{P}_{1}))\right\} $, where $%
\Omega _{1,\pi }^{\prime }$ is the set of all adjacency preserving maps $%
\omega ^{\prime }:\overrightarrow{P}_{1}\rightarrow G$ such that $\omega
^{\prime }(t)=v$ and $\omega ^{\prime }(h)=\pi (v)$. Let $\mathcal{A}_{\pi }$
be the set of all such $\alpha $'s for a given $\pi $.

Observe that if $\pi (v)=v$, then $\alpha $ maps $v$ to either $x$ or to $%
\dsum\limits_{\omega ^{\prime }\in \Omega _{\pi }^{\prime }}sgn(\omega
^{\prime }(\overrightarrow{P}_{1}))$ --- the $(v,v)$-entry of $\mathbf{A}%
_{G} $. However, if $\pi (v)\neq v$, then $\alpha $ maps $v$ to either $0$
or to $\dsum\limits_{\omega ^{\prime }\in \Omega _{\pi }^{\prime
}}sgn(\omega ^{\prime }(\overrightarrow{P}_{1}))$ --- the $(v,\pi (v))$%
-entry of $\mathbf{A}_{G}$. Thus, $\chi ^{P}(\mathbf{A}_{G},x)$ can be
written as%
\begin{eqnarray*}
\chi ^{P}(\mathbf{A}_{G},x) &=&\mathrm{perm}(x\mathbf{I-A}_{G}) \\
&=&\sum\limits_{\pi \in S_{V}}\prod\limits_{v\in V}\sum\limits_{\alpha \in 
\mathcal{A}_{\pi }}\alpha (v)\text{.}
\end{eqnarray*}%
Distributing we get%
\begin{equation*}
=\sum\limits_{\pi \in S_{V}}\sum\limits_{\beta \in \mathcal{B}_{\pi
}}\prod\limits_{v\in V}\beta (v)\text{,}
\end{equation*}%
where $\mathcal{B}_{\pi }$ is the set of all functions $\beta :V\rightarrow
\left\{ x\cdot \delta (v,\pi (v)),\dsum\limits_{\omega ^{\prime }\in \Omega
_{1,\pi }^{\prime }}sgn(\omega ^{\prime }(\overrightarrow{P}_{1}))\right\} $.

Write $\mathcal{B}_{\pi }=\mathcal{B}_{\pi }^{0}\cup \mathcal{B}_{\pi }^{1}$
where $\mathcal{B}_{\pi }^{0}$ is the set of all $\beta $ maps with $\beta
(v)=x\cdot \delta (v,\pi (v))$ for some non-fixed point $v$, and $\mathcal{B}%
_{\pi }^{1}$ is its complement. For every $\beta \in \mathcal{B}_{\pi }^{0}$%
, $\prod\limits_{v\in V}\beta (v)=0$, since there is a non-fixed point $v$
with $\beta (v)=x\cdot \delta (v,\pi (v))=0$. Now partition $\mathcal{B}%
_{\pi }^{1}$ into $\bigcup\limits_{k=0}^{\left\vert V\right\vert }\mathcal{B}%
_{k,\pi }$, where $\mathcal{B}_{k,\pi }$ is the set of all $\beta \in 
\mathcal{B}_{\pi }^{1}$ such that $\left\vert \beta ^{-1}(x)\right\vert =k$.
For each $\beta \in \mathcal{B}_{k,\pi }$ let $U_{\beta }\subseteq V$ be the
set of $\left\vert V\right\vert -k$ vertices not mapped to $x$ by $\beta $,
giving%
\begin{equation*}
=\sum\limits_{\pi \in S_{V}}\sum\limits_{k=0}^{\left\vert V\right\vert
}\sum\limits_{\beta \in \mathcal{B}_{k,\pi }}\left[ \left(
\prod\limits_{v\in U_{\beta }}\beta (v)\right) x^{k}\right] \text{.}
\end{equation*}

Let $\Omega _{1,\pi }^{\prime }[U_{\beta }]$ be the set of adjacency
preserving maps $\omega ^{\prime }:$ $\overrightarrow{P}_{1}\rightarrow
(G\Nwseline \overline{U}_{\beta })$ such that $\omega ^{\prime }(t)=v$, 
$\omega ^{\prime }(h)=\pi (v)$, and $G\Nwseline \overline{U}_{\beta }$
is the oriented-hypergraph resulting from the weak-deletion of vertices of $%
\overline{U}_{\beta }$ and the deletion of any resulting $0$-edges. Evaluating $\beta (v)$ gives%
\begin{equation*}
=\sum\limits_{\pi \in S_{V}}\sum\limits_{k=0}^{\left\vert V\right\vert
}\sum\limits_{\beta \in \mathcal{B}_{k,\pi }}\left[ \left(
\prod\limits_{v\in U_{\beta }}\dsum\limits_{\omega ^{\prime }\in \Omega
_{1,\pi }^{\prime }[U_{\beta }]}sgn(\omega ^{\prime }(\overrightarrow{P}%
_{1}))\right) x^{k}\right] \text{,}
\end{equation*}%
and distributing again produces%
\begin{equation*}
=\sum\limits_{\pi \in S_{V}}\sum\limits_{k=0}^{\left\vert V\right\vert
}\sum\limits_{\beta \in \mathcal{B}_{k,\pi }}\left[ \left(
\dsum\limits_{c\in \widehat{\mathfrak{C}}_{=0,\pi }(G\Nwseline 
\overline{U}_{\beta })}\prod\limits_{v\in U_{\beta }}\sigma (c(i_{v}))\sigma
(c((j_{v}))\right) x^{k}\right] \text{,}
\end{equation*}%
where $\widehat{\mathfrak{C}}_{=0,\pi }(G\Nwseline \overline{U}_{\beta
})$ is the set of all backstep-free permutomorphic contributors $%
c:\coprod\limits_{v\in U_{\beta }}\overrightarrow{P}_{1}\rightarrow
(G\Nwseline \overline{U}_{\beta })$ with $c(t_{v})=v$, $c(h_v)=\pi(v)$, and 
$\{c(h_{v})\mid v\in U_{\beta }\}=U_{\beta }$.

Note the $\omega ^{\prime }\in \Omega _{1,\pi }^{\prime }[U_{\beta }]$ are
adjacency preserving so any $1$-edges resulting from weak deletion of $%
\overline{U}_{\beta }$ cannot be mapped onto as backsteps. As a result,
weak-deletion is not needed, and the sub-hypergraph induced on vertex set $%
U_{\beta }$ would be sufficient. However, the preservation of all the
incidences containing $U_{\beta }$ is necessary for incidence preserving
maps when determining Laplacian coefficients. Thus, $G\Nwseline 
\overline{U}_{\beta }$ is the smallest sub-object in which all results are
true.

Factoring out $-1$ as in the proof for part 1 of Theorem \ref{LAPD} gives%
\begin{equation*}
=\sum\limits_{\pi \in S_{V}}\sum\limits_{k=0}^{\left\vert V\right\vert
}\sum\limits_{\beta \in \mathcal{B}_{k,\pi }}\left[ \left(
\dsum\limits_{c\in \widehat{\mathfrak{C}}_{=0,\pi }(G\Nwseline 
\overline{U}_{\beta })}(-1)^{oc(c)+nc(c)}\right) x^{k}\right] \text{.}
\end{equation*}

Again, the $\beta \in \mathcal{B}_{k,\pi }$ are determined by $\overline{U}%
_{\beta }\subseteq V$ and $c\in \widehat{\mathfrak{C}}_{=0,\pi
}(G\Nwseline \overline{U}_{\beta })$ are backstep-free contributors on $%
U_{\beta }$. Observe that for each $c\in \widehat{\mathfrak{C}}_{=0,\pi
}(G\Nwseline \overline{U}_{\beta })$ there is a natural extension to $G$
via elements of $\widehat{\mathfrak{C}}_{=k,\pi }(G)$ by including an
isolated vertex for each vertex in $\overline{U}_{\beta }$. Specifically,
the extension factors

\begin{equation*}
\begin{tikzpicture}[node distance = 1.25in, auto] \node
(TL){$\coprod\limits_{v\in U_{\beta
}}\overrightarrow{P}_{1}\cup\overline{U}_{\beta }$}; \node (TR) [right
of=TL,xshift=.5in] {$G$}; \node (BL)[below of=TL] {$\coprod\limits_{v\in
U_{\beta }}\overrightarrow{P}_{1}$}; \node (BR) [below of=TR]
{$G\Nwseline \overline{U}_{\beta }$}; \draw[->] (TL) to node
{$c\in\widehat{\mathfrak{C}}_{=k,\pi}(G)$} (TR); \draw[right hook->] (BL) to
node [] {} (TL); \draw[->] (BL) to node []
{$c\in\widehat{\mathfrak{C}}_{=0,\pi }(G\Nwseline \overline{U}_{\beta
})$} (BR); \draw[right hook->] (BR) to node [swap]{} (TR); \end{tikzpicture}
\end{equation*}%
and combines the sum as%
\begin{equation*}
=\sum\limits_{\pi \in S_{V}}\sum\limits_{k=0}^{\left\vert V\right\vert } 
\left[ \left( \dsum\limits_{c\in \widehat{\mathfrak{C}}_{=k,\pi
}(G)}(-1)^{oc(c)+nc(c)}\right) x^{k}\right] \text{,}
\end{equation*}%
where the elements of $\widehat{\mathfrak{C}}_{=k,\pi }(G)$ are
backstep-free with $k$ isolated vertices (which determine $\overline{U}%
_{\beta }$) and adjacency preserving on the non-isolated vertices.

Reverse the order of the first two summations and combining permutomorphic
contributors as in Theorem \ref{LAPD} yields%
\begin{equation*}
=\sum\limits_{k=0}^{\left\vert V\right\vert }\left( \dsum\limits_{c\in 
\widehat{\mathfrak{C}}_{=k}(G)}(-1)^{oc(c)+nc(c)}\right) x^{k}\text{.}
\end{equation*}%
Completing the proof for part 1.

\textit{Proof of 2. }Mirrors the proof of part 2 in Theorem \ref{LAPD}. The
inclusion of the sign of the permutation the proof is identical until

\begin{eqnarray*}
\chi ^{D}(\mathbf{A}_{G},x) &=&\sum\limits_{k=0}^{\left\vert V\right\vert
}\sum\limits_{\pi \in S_{V}}(-1)^{ec(\pi )}\left[ \left( \dsum\limits_{c\in 
\widehat{\mathfrak{C}}_{=k,\pi }(G)}(-1)^{oc(c)+nc(c)}\right) x^{k}\right] \\
&=&\sum\limits_{k=0}^{\left\vert V\right\vert }\sum\limits_{\pi \in S_{V}} 
\left[ \left( \dsum\limits_{c\in \widehat{\mathfrak{C}}_{=k,\pi
}(G)}(-1)^{ec(\pi )+oc(c)+nc(c)}\right) x^{k}\right] \\
&=&\sum\limits_{k=0}^{\left\vert V\right\vert }\sum\limits_{\pi \in S_{V}} 
\left[ \left( \dsum\limits_{c\in \widehat{\mathfrak{C}}_{=k,\pi
}(G)}(-1)^{ec(c)+oc(c)+nc(c)}\right) x^{k}\right] \\
&=&\sum\limits_{k=0}^{\left\vert V\right\vert }\left( \dsum\limits_{c\in 
\widehat{\mathfrak{C}}_{=k}(G)}(-1)^{pc(c)}\right) x^{k}\text{.}
\end{eqnarray*}

\textit{Proofs of 3 and 4.} Both $\chi ^{D}(\mathbf{L}_{G},x)$ and $\chi
^{P}(\mathbf{L}_{G},x)$ are proved similarly, but with the following
changes. First, replace adjacency preserving maps with incidence preserving
maps. Second, $x\mathbf{I-L}_{G}$ has the sign of adjacencies represented
correctly while the degrees are negated, so factor out a $-1$ for each
backstep to produce $(-1)^{bs(c)}$, and then factor out a $-1$ for each
negative adjacency producing $(-1)^{nc(c)}$. Finally, distributing incidence
preserving maps produces an inner sum over $\widehat{\mathfrak{C}}_{\geq
0,\pi }(G\Nwseline \overline{U}_{\beta })$. \qed
\end{proof}

\begin{remark}
Sachs' Theorem (Theorem \ref{Sachs}) is a corollary of Theorem \ref{MainT}
if $G$ is a graph and oriented circles are combined.
\end{remark}

\begin{remark}
Comparing $\mathrm{perm}(-\mathbf{L}_{G})=\chi ^{P}(\mathbf{L}_{G},0)$ we
see 
\begin{equation*}
\dsum\limits_{c\in \mathfrak{C}_{\geq 0}(G)}(-1)^{oc(c)+nc(c)+\left\vert
V\right\vert }=\sum\limits_{c\in \mathfrak{C}_{\geq 0}(G)}(-1)^{nc(c)+bs(c)}%
\text{.}
\end{equation*}%
This suggests the parity of $bs(c)$ is equal to the parity of $%
oc(c)+\left\vert V\right\vert $. This can easily be verified directly by
cases, and helps translate between Theorems \ref{LAPD} and \ref{MainT}.
\end{remark}

\subsubsection{Examples}

\paragraph{$\mathbf{A}_{G}$\textbf{\ for an oriented }$3$\textbf{-circuit}}

Consider the incidence-oriented $3$-circuit in Figure \ref{C3Perm} below
with its contributors grouped by permutomorphism classes.

\begin{figure}[H]
\centering
\includegraphics{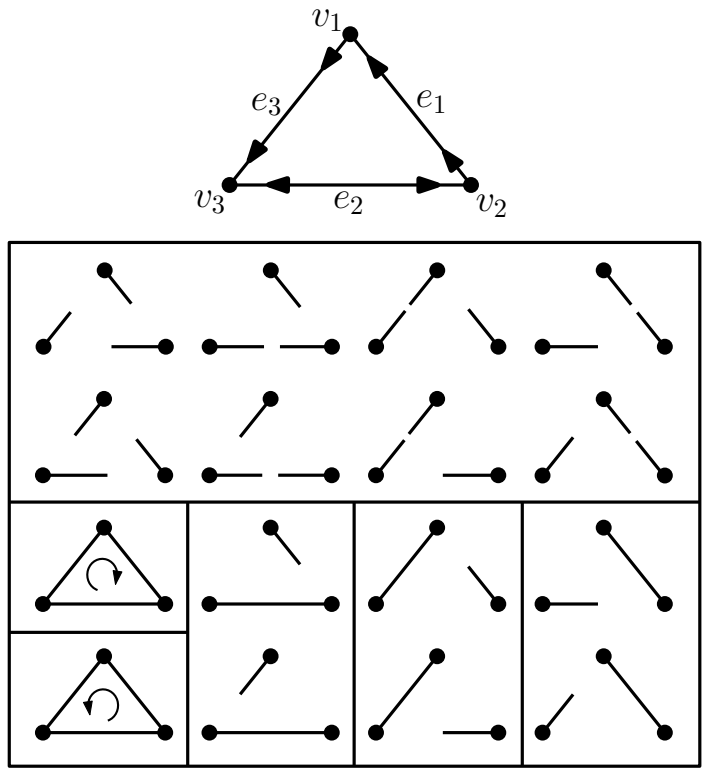}
\caption{An incidence-oriented $3$-circuit and its contributors grouped by
permutomorphism classes.}
\label{C3Perm}
\end{figure}

The oriented $3$-circuit has adjacency matrix%
\begin{equation*}
\mathbf{A}_{G}=\left[ 
\begin{array}{ccc}
0 & 1 & 1 \\ 
1 & 0 & -1 \\ 
1 & -1 & 0%
\end{array}%
\right]
\end{equation*}%
with $\chi ^{D}(\mathbf{A}_{G},x)=x^{3}-3x+2$. From part 2 of Theorem \ref%
{MainT} the sub-contributor signing function is $(-1)^{pc(c)}$. Observe that
there are only two backstep-free contributors, each with $0$ positive
circles, so the constant is $(-1)^{0}+(-1)^{0}=2$.

The coefficient of $x^{1}$ is the signed sum of elements of $\widehat{%
\mathfrak{C}}_{=1}(G)$. The elements of $\widehat{\mathfrak{C}}_{=1}(G)$ for
the incidence-oriented $3$-circuit are: 
\begin{figure}[H]
\centering
\includegraphics{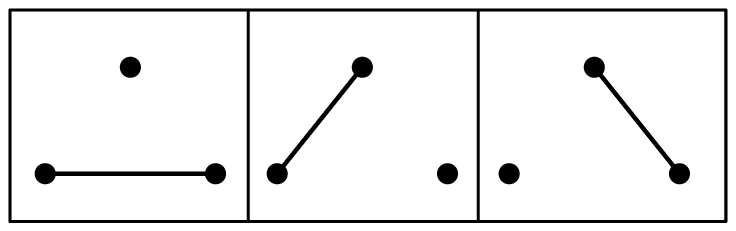}
\caption{Elements of $\protect\widehat{\mathfrak{C}}_{=1}(G)$ for an
incidence-oriented $3$-circuit grouped by permutomorphism classes.}
\label{C3PermEq1}
\end{figure}
From Figure \ref{C3PermEq1} we see there are three elements in $\widehat{%
\mathfrak{C}}_{=1}(G)$, each with a single degenerate $2$-circle (which are
necessarily positive) so the coefficient of $x^{1}$ is $%
(-1)^{1}+(-1)^{1}+(-1)^{1}=-3$.

The remaining coefficients are obtained by observing that $\widehat{%
\mathfrak{C}}_{=(\left\vert V\right\vert -1)}(G)$ is necessarily empty, and $%
\widehat{\mathfrak{C}}_{=\left\vert V\right\vert }(G)$ only contains the set
of isolated vertices.

\paragraph{$\mathbf{L}_{G}$\textbf{\ for an oriented }$3$\textbf{-circuit}}

The oriented $3$-circuit in Figure \ref{C3Perm} has Laplacian matrix 
\begin{equation*}
\mathbf{L}_{G}=\left[ 
\begin{array}{ccc}
2 & -1 & -1 \\ 
-1 & 2 & 1 \\ 
-1 & 1 & 2%
\end{array}%
\right]
\end{equation*}%
with $\chi ^{D}(\mathbf{L}_{G},x)=x^{3}-6x^{2}+9x-4$. From part 4 of Theorem %
\ref{MainT} the sub-contributor signing function is $%
(-1)^{ec(c)+nc(c)+bs(c)} $. The contributors in the first column in Figure %
\ref{C3Perm} are all $-1$ while the remaining three columns sum each sum to $%
0$, producing a constant $-4$.

The coefficient of $x^{1}$ is the signed sum of elements of $\widehat{%
\mathfrak{C}}_{\geq 1}(G)$. The elements of $\widehat{\mathfrak{C}}_{\geq
1}(G)$ for the incidence-oriented $3$-circuit are: 
\begin{figure}[H]
\centering
\includegraphics{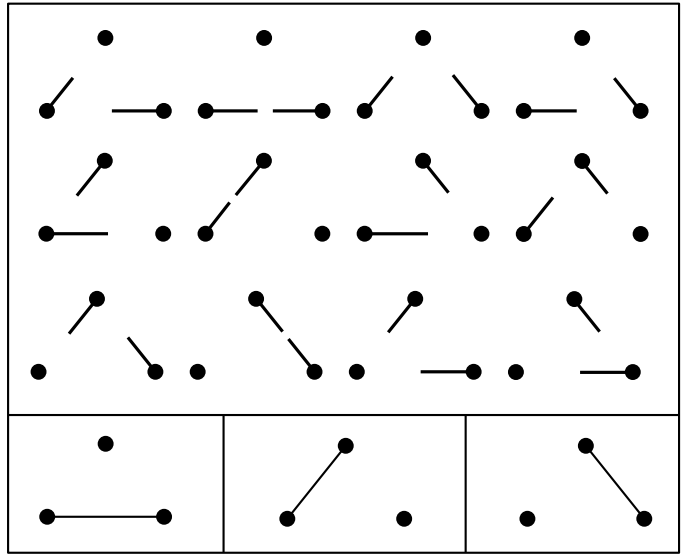}
\caption{Elements of $\protect\widehat{\mathfrak{C}}_{\geq 1}(G)$ for an
incidence-oriented $3$-circuit grouped by permutomorphism classes.}
\label{C3PermGEq1}
\end{figure}
From Figure \ref{C3PermGEq1} there are fifteen elements in $\widehat{%
\mathfrak{C}}_{\geq 1}(G)$, the twelve arising from the identity permutation
are $+1$ while the three with degenerate $2$-circles are $-1$, producing a
coefficient of $12-3=+9$ for $x^{1}$.

The remaining coefficients can be checked similarly.

\paragraph{\textbf{An extroverted }$3$\textbf{-edge}}

Consider the extroverted-oriented $3$-edge in and its contributors in Figure %
\ref{E3Perm}. 
\begin{figure}[H]
\centering
\includegraphics{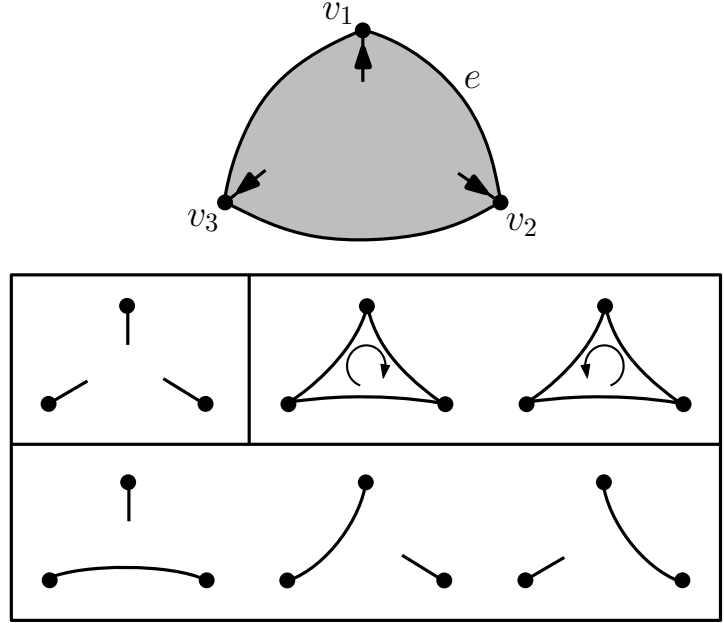}
\caption{An incidence-oriented $3$-edge and its contributors grouped by
permutomorphism classes.}
\label{E3Perm}
\end{figure}
The extroverted $3$-edge has adjacency matrix%
\begin{equation*}
\mathbf{A}_{G}=\left[ 
\begin{array}{ccc}
0 & -1 & -1 \\ 
-1 & 0 & -1 \\ 
-1 & -1 & 0%
\end{array}%
\right]
\end{equation*}%
with $\chi ^{D}(\mathbf{A}_{G},x)=x^{3}-3x+2$, and Laplacian matrix 
\begin{equation*}
\mathbf{L}_{G}=\left[ 
\begin{array}{ccc}
1 & 1 & 1 \\ 
1 & 1 & 1 \\ 
1 & 1 & 1%
\end{array}%
\right]
\end{equation*}%
with $\chi ^{D}(\mathbf{L}_{G},x)=x^{3}-3x^{2}$.

The constant of $\chi ^{D}(\mathbf{A}_{G},x)$ is $2$ as there are only two
backstep-free contributors, each with $0$ positive circles. The constant of $%
\chi ^{D}(\mathbf{L}_{G},x)$ is $0$ as the contributors in the top row
contributors cancel those in the bottom row. The remaining coefficients can
be checked by referring to Figure \ref{E3Perm}.

\subsection{Optimizing Permanents and Determinants}

Theorems \ref{LAPD} and \ref{MainT} allow for the study of optimizing
permanents and determinants through contributors. The bounds presented are
for integral incidence matrices --- it may be possible to sharpen of the
bounds via complex unit orientations by combining with the work from \cite%
{OHHar} and \cite{Reff1}.

\begin{theorem}
\label{PLMax}For a fixed underlying oriented hypergraph $G$ with no isolated
vertices, no $0$-edges, and varied orientation function $\sigma $, the
following are equivalent:

\begin{enumerate}
\item $\mathrm{perm}(\mathbf{L}_{G})$ is maximized over all orientations $%
\sigma $,

\item $\sigma $ is the all extroverted or all introverted orientation,

\item $\mathbf{L}_{G}$ is the signless Laplacian,

\item $\mathrm{perm}(\mathbf{L}_{G})=\left\vert \mathfrak{C}(G)\right\vert $.
\end{enumerate}
\end{theorem}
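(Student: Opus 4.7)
My plan uses Theorem \ref{LAPD}(1): $\mathrm{perm}(\mathbf{L}_G) = \sum_{c \in \mathfrak{C}(G)} (-1)^{oc(c)+nc(c)}$. Since each summand is $\pm 1$ and $\mathfrak{C}(G)$ is determined by the underlying incidence structure (independent of $\sigma$), the value $|\mathfrak{C}(G)|$ is a uniform upper bound on $\mathrm{perm}(\mathbf{L}_G)$. I would close the cycle $(2) \Rightarrow (3) \Rightarrow (4) \Rightarrow (1) \Rightarrow (2)$.

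The first three implications are short. For $(2) \Rightarrow (3)$, if $\sigma \equiv +1$ (all extroverted) then $\mathbf{H}_G$ is nonnegative, so $\mathbf{L}_G = \mathbf{H}_G \mathbf{H}_G^T$ coincides with the signless Laplacian; the all-introverted case is symmetric. For $(3) \Rightarrow (4)$, when $\mathbf{L}_G$ is the signless Laplacian each individual adjacency has sign $-\sigma(i)\sigma(j) = -1$, so a circle of length $k$ in any contributor has sign $(-1)^k$; hence every odd circle is negative and every even circle is positive, so $oc(c) \equiv nc(c) \pmod 2$ for every $c$ and the signed sum collapses to $|\mathfrak{C}(G)|$. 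The implication $(4) \Rightarrow (1)$ is immediate since $|\mathfrak{C}(G)|$ is the orientation-independent upper bound and is actually attained (by the all-extroverted orientation, via the previous steps).

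The real work is $(1) \Rightarrow (2)$, which I would handle by contrapositive: if $\sigma$ is neither all-extroverted nor all-introverted, then some edge $e$ carries two incidences $i, j$ with $\sigma(i) \neq \sigma(j)$, and I must manufacture a contributor $c$ with $(-1)^{oc(c)+nc(c)} = -1$. Such a bad contributor drops the sum to at most $|\mathfrak{C}(G)| - 2$ while the all-extroverted orientation attains $|\mathfrak{C}(G)|$, contradicting maximality. The clean sub-case is $\iota(i) = \iota(j) = (v, e)$ (parallel opposite-sign incidences): the loop $(v, i, e, j, v)$ is then positive, and padding with backsteps at the remaining vertices, which exist because no vertex is isolated, creates a contributor whose only circle is an odd positive loop, giving $oc + nc = 1$.

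The main obstacle is the non-parallel sub-case $\iota(i) = (v, e)$ and $\iota(j) = (w, e)$ with $v \neq w$: there the positive non-loop adjacency only yields degenerate $2$-circles, whose sign is always $+1$ and which contribute harmlessly. I would mine the full sign pattern of $e$, either reducing to the loop case by locating a parallel pair at $v$ or $w$ of opposite sign, or assembling a non-degenerate circle across $v$ and $w$ (potentially using a second edge) whose net sign violates the $(-1)^k$ balance required for every contributor to contribute $+1$. Making this case analysis exhaustive using only the standing hypotheses, together with the fact that per-edge incidence-sign reversal preserves $\mathbf{L}_G = \mathbf{H}_G \mathbf{H}_G^T$ and so may be used to normalize $\sigma$ on offending edges, is where I expect the technical crux to lie.
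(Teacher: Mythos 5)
Your cycle $(2)\Rightarrow(3)\Rightarrow(4)\Rightarrow(1)$ is correct and matches the paper's reasoning: under the signless Laplacian every adjacency is negative, so a circle of a contributor is negative exactly when it is odd, every summand in Theorem \ref{LAPD}(1) is $+1$, and $\left\vert \mathfrak{C}(G)\right\vert$ is an orientation-independent upper bound that is attained. The gap is exactly where you say it is: $(1)\Rightarrow(2)$ is not closed. You reduce to manufacturing a contributor $c$ with $oc(c)+nc(c)$ odd starting from a mixed-sign edge, you settle the parallel-incidence sub-case (an opposite-sign pair at one vertex gives a positive odd loop, hence a $-1$ contributor after padding with backsteps), and you leave the non-parallel sub-case open. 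That sub-case cannot be finished by mining the sign pattern of $e$ alone: a positive non-loop adjacency that is only ever traversed as a degenerate $2$-circle is even and positive under every orientation, so it never produces a bad contributor. (Separately, your opening move for the contrapositive is too strong: the negation of ``all extroverted or all introverted'' does not give an edge with two incidences of different signs --- the orientation could be uniform on each edge but extroverted on some edges and introverted on others, and, as you note yourself, per-edge sign reversal preserves $\mathbf{L}_{G}$, so such an orientation still yields the signless Laplacian.)

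The paper's device for the hard direction is different in kind and is the idea your proposal is missing: it starts not from a bad edge but from a bad circle. If $\mathrm{perm}(\mathbf{L}_{G})=\left\vert \mathfrak{C}(G)\right\vert$ then every contributor must satisfy $oc(c)\equiv nc(c)\pmod 2$; if some contributor contains a circle that is odd-but-not-negative or negative-but-not-odd, the paper ``refines'' that single circle --- replaces it by a backstep at each of its vertices while leaving the rest of the contributor intact --- producing a second contributor whose value $(-1)^{oc+nc}$ has the opposite sign, which forces the sum below $\left\vert \mathfrak{C}(G)\right\vert$. This circle-collapsing move converts any offending circle into a parity violation with no case analysis on where the mixed signs sit inside an edge; your loop sub-case is the special instance where the offending circle has length $1$. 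That said, your worry about degenerate $2$-circles is not a failure of nerve but a real obstruction: neither your construction nor the paper's refinement detects a positive adjacency that lies on no non-degenerate circle of any contributor (a single $2$-edge oriented $(+,-)$ already has $\mathrm{perm}(\mathbf{L}_{G})=\left\vert \mathfrak{C}(G)\right\vert=2$ without being extroverted or introverted), so the crux you flagged is a genuine soft spot in the equivalence of $(4)$ with $(2)$ and $(3)$ rather than something resolvable by sharper edge-local casework.
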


\begin{proof}
Part $2$ and $3$ are trivially equivalent. 
To see the equivalence of parts $1$ and $3$ observe 
that in the signless Laplacian a circle in an associated contributor is negative if, and only if, it is odd. 
From Theorem \ref{LAPD}, $\perm(\mathbf{L}_{G})$ is maximal when, for every contributor $c$, $oc(c)$ and $nc(c)$ 
have the same parity. However, if $oc(c) \neq nc(c)$, then there either exists a circle in a contributor which is either 
odd and not negative, or negative and not odd. Refine the corresponding algebraic cycle into fixed elements 
causing each element in the circle to become a backstep, forming a new contributor $c'$. The parity 
of $oc(c')$ and $nc(c')$ are not equal, so $oc(c) = nc(c)$ for all $c\in \mathfrak{C}(G)$.

Moreover, when $oc(c) = nc(c)$ for all contributors $c$, then a circle of $c$ is odd if, and only if, it is negative by 
a similar refinement argument as above. Thus, $\perm(\mathbf{L}_{G})$ is maximal if, and only if, $L_G$ is the signless Laplacian.

The equivalence for part $4$ is obvious by Theorem \ref{LAPD}. \qed
\end{proof}

\begin{theorem}
\label{Bounds}Let $G$ be an oriented hypergraph with no isolated vertices or 
$0$-edges with Laplacian matrix $\mathbf{L}_{G}$, then

\begin{enumerate}
\item $-\left\vert \mathfrak{C}(G)\right\vert <\mathrm{perm}(\mathbf{L}%
_{G})\leq \left\vert \mathfrak{C}(G)\right\vert $, and $\mathrm{perm}(%
\mathbf{L}_{G})=\left\vert \mathfrak{C}(G)\right\vert $ if, and only if, $G$
is extroverted or introverted,

\item $-\left\vert \mathfrak{C}(G)\right\vert <\det (\mathbf{L}_{G})\leq
\left\vert \mathfrak{C}(G)\right\vert $, and $\det (\mathbf{L}%
_{G})=\left\vert \mathfrak{C}(G)\right\vert $ if, and only if, the connected
components of $G$ consist of bouquets of introverted or extroverted $k$%
-edges.
\end{enumerate}
\end{theorem}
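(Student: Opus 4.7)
The plan is to apply Theorem~\ref{LAPD} and write both $\mathrm{perm}(\mathbf{L}_G)$ and $\det(\mathbf{L}_G)$ as sums of $|\mathfrak{C}(G)|$ terms each of magnitude $1$, so the upper bounds $\leq |\mathfrak{C}(G)|$ in both parts are immediate. For the strict lower bounds I exhibit a single contributor guaranteed to contribute $+1$: since $G$ has no isolated vertices, the \emph{all-backsteps contributor} $c_0$ (sending each $v$ to itself via any chosen backstep at $v$) is a well-defined element of $\mathfrak{C}(G)$, and it has no circles, so $oc(c_0)=nc(c_0)=pc(c_0)=0$ and it contributes $+1$ to both sums. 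Hence each sum is at least $2-|\mathfrak{C}(G)|>-|\mathfrak{C}(G)|$. The sharpness of the permanent upper bound is then exactly the equivalence of conditions (1), (2), (4) in Theorem~\ref{PLMax}, which identifies the extremal $G$ as the extroverted and introverted ones, so part~1 is complete.

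For part~2, the identity $\det(\mathbf{L}_G)=\sum_{c\in\mathfrak{C}(G)}(-1)^{pc(c)}$ shows that the upper bound is attained iff every contributor has $pc(c)\equiv 0\pmod{2}$. Any $G$-contributor splits as a product of component-contributors with $pc$ adding, and any single-component contributor extends to a $G$-contributor by all-backsteps (contributing no circles) on the remaining components, so uniform even parity on $G$ is equivalent to uniform even parity on every connected component $C$. For the easy direction, if $C$ is the bouquet of the theorem --- a single vertex $v$ whose every edge has all its incidences at $v$ and all of common sign --- then the only contributors of $C$ are backsteps and loops $(v,i,e,j,v)$ with $i\neq j$ on a common edge $e$; since $\sigma(i)=\sigma(j)$, each loop has sign $-\sigma(i)\sigma(j)=-1$, so $pc(c)=0$ identically and the component achieves equality.

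The main obstacle is the converse, which I plan to address by a one-circle refinement of $c_0$ modeled on the proof of Theorem~\ref{PLMax}. If $C$ contains two distinct vertices $v\neq w$ joined by an adjacency $(v,i,e,j,w)$, replace the $c_0$-backsteps at $v$ and $w$ by the degenerate $2$-circle on $e$; its sign $(-1)^{2}\sigma(i)\sigma(j)\sigma(j)\sigma(i)=+1$ is always positive, so the new contributor $c_1$ has $pc(c_1)=1$. If $C$ is a single vertex $v$ but some edge $e$ has incidences at $v$ of mixed sign, pick $i\neq j$ on $e$ with $\sigma(i)\neq\sigma(j)$ and replace a backstep at $v$ in $c_0$ by the loop $(v,i,e,j,v)$; its sign $-\sigma(i)\sigma(j)=+1$ again gives $pc(c_1)=1$. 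In either case $c_0$ and $c_1$ have $pc$-values of opposite parity, contradicting the uniform-parity requirement. The subtle step is confirming $c_1\in\mathfrak{C}(G)$, which follows directly from the no-isolated-vertex hypothesis and the definition of a contributor. This rules out every non-bouquet single-component structure and completes the characterization.
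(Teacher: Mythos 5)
Your proof is correct and follows essentially the same route as the paper: the upper and lower bounds come from writing $\mathrm{perm}(\mathbf{L}_{G})$ and $\det(\mathbf{L}_{G})$ as sums of $\pm 1$ over $\mathfrak{C}(G)$ via Theorem~\ref{LAPD}, the permanent's extremal case is delegated to Theorem~\ref{PLMax}, and the determinant's extremal case is settled by the observation that a degenerate $2$-circle is always positive, forcing every adjacency to be a negative loop and hence every component to be a bouquet of introverted or extroverted edges. The one place you genuinely diverge is the strict lower bound for the permanent: the paper invokes a refinement argument in the style of Theorem~\ref{PLMax} applied to a contributor with $oc(c)$ and $nc(c)$ of unequal parity, whereas you exhibit the all-backsteps contributor $c_{0}$ (which exists by the no-isolated-vertex hypothesis and has $oc=nc=pc=0$) as a term equal to $+1$ in both sums, handling both strict inequalities uniformly. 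This is a legitimate simplification and arguably cleaner, since the same witness $c_{0}$ is what the paper already uses for the determinant case; your component-decomposition bookkeeping and the explicit mixed-sign-loop case in part~2 are also sound elaborations of what the paper states more tersely.
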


\begin{proof}

From \ref{LAPD} it is clear that each of $\mathrm{perm}(\mathbf{L}_{G})$ and 
$\det (\mathbf{L}_{G})$ are in the interval $[-\left\vert \mathfrak{C}%
(G)\right\vert ,\left\vert \mathfrak{C}(G)\right\vert ]$.

\textit{Proof of 1. }From \ref{LAPD} $\mathrm{perm}(\mathbf{L}%
_{G})=\left\vert \mathfrak{C}(G)\right\vert $ if, and only if, for every
contributor $c$, $oc(c)$ and $nc(c)$ have the same parity. Also, $%
\mathrm{perm}(\mathbf{L}_{G})=-\left\vert \mathfrak{C}(G)\right\vert $ if,
and only if, for every contributor $c$, $oc(c)$ and $nc(c)$ have different
parity. From Theorem \ref{PLMax} $\mathrm{perm}(\mathbf{L}_{G})=\left\vert 
\mathfrak{C}(G)\right\vert $ if, and only if, $G$ is extroverted or
introverted. To see $\mathrm{perm}(\mathbf{L}_{G})\neq -\left\vert \mathfrak{%
C}(G)\right\vert $ use a similar refinement argument as in \ref{PLMax} on
the unequal number of $oc(c)$ and $nc(c)$.

\textit{Proof of 2.} From \ref{LAPD} $\det (\mathbf{L}_{G})=\left\vert 
\mathfrak{C}(G)\right\vert $ if, and only if, every contributor has an even
number of positive circles, while $\det (\mathbf{L}_{G})=-\left\vert 
\mathfrak{C}(G)\right\vert $ if, and only if, every contributor has an odd
number of positive circles. To see that $\det (\mathbf{L}_{G})\neq
-\left\vert \mathfrak{C}(G)\right\vert $ observe that there is at least one
contributor with $0$ positive circles, since there is at least one
contributor corresponding to the identity permutation.

If the connected components of $G$ consist of a bouquet of introverted or
extroverted $k$-edges, there are $0$ positive circles, and $\det (\mathbf{L}%
_{G})=\left\vert \mathfrak{C}(G)\right\vert $. To see the converse, observe
that if $G$ contains a non-loop adjacency, then there is a contributor with
the resulting degenerate $2$-circle as the only circle, and it is
necessarily positive. Therefore, the only adjacencies $G$ can contain are
negative loops, and the result follows. \qed 
\end{proof}

\begin{example}
The oriented $3$-edge in Figure \ref{E3Perm} is extroverted with $6$
contributors so $\mathrm{perm}(\mathbf{L}_{G})=6$.
\end{example}

A refinement argument for adjacency matrices cannot be used on elements of $%
\mathfrak{C}_{=0}(G)$ as they are necessarily backstep-free. Moreover, for
oriented hypergraphs, if a circle of a contributor lies within a single edge
of $G$, then the odd/even parity of the circle determines if it is
negative/positive. This presents some complications as these
contributor-circles are not circles in the oriented hypergraph. It is easy
to see that this prevents balanced oriented hypergraphs from being a class
of oriented hypergraphs where $\mathrm{perm}(\mathbf{A}_{G})=\left\vert 
\mathfrak{C}_{=0}(G)\right\vert $ --- consider an oriented hypergraph
consisting of a $4$-edge and a $2$-edge that share a single vertex, and the
contributor consisting of the degenerate $2$-circle for the $2$-edge and the 
$3$-circle on the remaining $4$-edge vertices. However, the following signed
graphic theorem is true:

\begin{theorem}
\label{Bounds2}If $G$ is a balanced signed graph, then $\mathrm{perm}(%
\mathbf{A}_{G})$ is maximal and equals $\left\vert \mathfrak{C}%
_{=0}(G)\right\vert $.
\end{theorem}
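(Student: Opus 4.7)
The plan is to invoke part 3 of Theorem \ref{LAPD}, which expresses $\mathrm{perm}(\mathbf{A}_G) = \sum_{c \in \mathfrak{C}_{=0}(G)} (-1)^{nc(c)}$. Since each summand lies in $\{-1,+1\}$, the triangle inequality gives the upper bound $\mathrm{perm}(\mathbf{A}_G) \leq |\mathfrak{C}_{=0}(G)|$ immediately, and equality holds if and only if every backstep-free contributor $c$ satisfies $nc(c)=0$, i.e., every circle in every such contributor is positive. Thus the theorem reduces to verifying that, under the balance hypothesis, no contributor in $\mathfrak{C}_{=0}(G)$ contains a negative circle.

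The main body of the argument is a case analysis on the possible lengths of circles that can appear in a contributor of a signed graph. A $1$-circle of $c$ is a loop of $G$ itself, whose adjacency sign equals the sign of the loop as a cycle, hence is $+$ by balance. A degenerate $2$-circle traverses a single edge $e$ twice and so contributes sign $\sigma(e)^2 = +1$ regardless of balance (this is the phenomenon already noted in the discussion preceding the statement). A non-degenerate $2$-circle uses two distinct parallel edges between one pair of vertices, which is an honest $2$-cycle of $G$ and so is positive by balance. Finally, for $k \geq 3$, since every edge of a signed graph has exactly two endpoints, the $k$ adjacencies comprising a contributor $k$-circle must come from $k$ distinct edges, and these edges together trace an actual $k$-cycle of $G$; the sign of the circle is the product of the adjacency signs, which equals the sign of that cycle, again $+$ by balance.

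Combining these cases, every contributor circle is positive, so $nc(c)=0$ for every $c \in \mathfrak{C}_{=0}(G)$, each summand in the formula of Theorem \ref{LAPD}(3) equals $+1$, and the sum equals $|\mathfrak{C}_{=0}(G)|$; maximality then follows from the upper bound established at the start.

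The main obstacle is the case $k \geq 3$: one must argue that a contributor $k$-circle necessarily traces out an actual cycle of $G$ so that the balance hypothesis can be invoked on the product of edge signs. This is precisely the structural feature that breaks down for genuine oriented hypergraphs, where a contributor circle may live entirely inside a single edge with size $> 2$ and therefore fail to correspond to any circle of the hypergraph; the $4$-edge-plus-$2$-edge example cited just before the statement shows why the argument is genuinely restricted to the signed graph setting.
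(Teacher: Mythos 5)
Your proposal is correct and follows essentially the same route as the paper: invoke Theorem \ref{LAPD}(3), observe that the permanent is bounded above by $\left\vert \mathfrak{C}_{=0}(G)\right\vert$, and show that balance together with the automatic positivity of degenerate $2$-circles forces $nc(c)=0$ for every backstep-free contributor; your case analysis on circle lengths merely spells out what the paper leaves implicit. One small imprecision: equality in the bound holds if and only if $nc(c)$ is \emph{even} for every $c$, not necessarily zero, but since you establish the stronger condition $nc(c)=0$ this does not affect the argument.
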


\begin{proof}

From Theorem \ref{LAPD} $\mathrm{perm}(\mathbf{A}_{G})=\left\vert \mathfrak{C%
}_{=0}(G)\right\vert $ if, and only if, every contributor has an even number
of negative circles. A balanced signed graph has $0$ negative circles, while
degenerate $2$-circles are positive, so every element of $\mathfrak{C}%
_{=0}(G) $ has no negative circles. \qed 
\end{proof}

Balanced signed graphs are not the only class of signed graphs where $%
\mathrm{perm}(\mathbf{A}_{G})=\left\vert \mathfrak{C}_{=0}(G)\right\vert $,
as the negative $3$-circuit in Figure \ref{C3Perm} is maximal, it is easy to
check that negative circuit graphs are also optimal.

The determinant of $\mathbf{A}_{G}$ presents similar issues. However,
bouquets of $1$-edges and positive $2$-edge loops, and positive circuits of
length $\equiv 0\mod4$, can easily be checked to optimize $\det (\mathbf{A}%
_{G})$.

\begin{problem}
Characterize the oriented hypergraphs that optimize the $x^{k}$ coefficient
of a given characteristic polynomial.
\end{problem}

\section*{Acknowledgements}

This research is partially supported by Texas State University Mathworks.
The authors sincerely thank the referee for careful reading the manuscript
and for their valuable feedback.


\section*{References}

\bibliographystyle{amsplain2}
\bibliography{mybib}

\end{document}